\numberwithin{equation}{section}
\numberwithin{figure}{section}
\theoremstyle{plain}
\newtheorem{thm}{\protect\theoremname}
\theoremstyle{definition}
\newtheorem{defn}[thm]{\protect\definitionname}
\theoremstyle{remark}
\newtheorem{rem}[thm]{\protect\remarkname}
\theoremstyle{plain}
\newtheorem{lem}[thm]{\protect\lemmaname}
\theoremstyle{plain}
\newtheorem{prop}[thm]{\protect\propositionname}
\theoremstyle{plain}
\newtheorem{cor}[thm]{\protect\Corollaryname}
\theoremstyle{plain}
\newtheorem{hyp}[thm]{\protect\hypothesisname}
\theoremstyle{plain}
\newtheorem{ex}[thm]{\protect\examplename}
\theoremstyle{plain}
\providecommand{\definitionname}{Definition}
\providecommand{\lemmaname}{Lemma}
\providecommand{\propositionname}{Proposition}
\providecommand{\remarkname}{Remark}
\providecommand{\theoremname}{Theorem}
\providecommand{\hypothesisname}{Hypothesis}
\providecommand{\examplename}{Example}
\providecommand{\notationname}{Notation}
\providecommand{\Corollaryname}{Corollary}
\newcommand{\la}{\langle}
\newcommand{\ra}{\rangle}
\newcommand{\cA}{\mathcal{A}}
\newcommand{\cD}{\mathcal{D}}
\newcommand{\cE}{\mathcal{E}}
\newcommand{\cF}{\mathcal{F}}
\newcommand{\cK}{\mathcal{K}}
\newcommand{\cN}{\mathcal{N}}
\newcommand{\cS}{\mathcal{S}}
\newcommand{\cW}{\mathcal{W}}
\newcommand{\CC}{\mathbb{C}}
\newcommand{\EE}{\mathbb{E}}
\newcommand{\FF}{\mathbb{F}}
\newcommand{\NN}{\mathbb{N}}
\newcommand{\PP}{\mathbb{P}}
\newcommand{\RR}{\mathbb{R}}
\newcommand{\bX}{\mathbf{X}}
\newcommand{\bY}{\mathbf{Y}}
\newcommand{\dd}{\mathop{}\!\mathrm{d}}
\let\oldtocsection=\tocsection
\let\oldtocsubsection=\tocsubsection
\let\oldtocsubsubsection=\tocsubsubsection
\renewcommand{\tocsection}[2]{\hspace{0em}\oldtocsection{#1}{#2}}
\renewcommand{\tocsubsection}[2]{\hspace{1em}\oldtocsubsection{#1}{#2}}
\renewcommand{\tocsubsubsection}[2]{\hspace{2em}\oldtocsubsubsection{#1}{#2}}
\numberwithin{thm}{section}
\setlist[description]{leftmargin=\parindent,labelindent=\parindent}
\title[Universal approximation on non-geometric rough paths]{Universal approximation on non-geometric rough paths and applications to financial derivatives pricing}
\author{Fred Espen Benth$^{1,2}$}
\address{$^1${\tiny Department of Data Science and Analytics, BI Norwegian Business School.}}
\address{$^2${\tiny Department of Mathematics, University of Oslo.}}
\author{Fabian A. Harang $^3$}
\address{$^3${\tiny Department of Economics, BI Norwegian Business School.}}
\author{Fride Straum$^4$}
\address{$^4${\tiny Department of Mathematical Sciences, NTNU, Trondheim, Norway.}}
\keywords{Signature payoff, derivatives pricing, path dependence, rough paths, signatures, model free finance, universal approximation}
\subjclass[2020]{60L10; 60L90; 60H05;91G20; 91G60 }
\thanks{We acknowledge the support from the Center for Advanced Study (CAS) in Oslo, Norway, which funded and hosted our research project "Signatures for Images" during the 2023/24 academic year. We are grateful to an anonymous referee and the editor for constructive critics leading to a significantly improved version of our paper.}
\begin{document}

\maketitle
\makeatletter
\@setaddresses
\makeatother
\begin{abstract}
We present a novel perspective on the universal approximation theorem for rough path functionals, introducing a polynomial-based approximation class. We extend universal approximation to non-geometric rough paths within the tensor algebra. This development addresses critical needs in finance, where no-arbitrage conditions necessitate Itô integration. Furthermore, our findings motivate a hypothesis for payoff functionals in financial markets, allowing straightforward analysis of signature payoffs proposed in \cite{arribas2018derivativespricingusingsignature}.
\end{abstract}

\tableofcontents

\section{Introduction}

Financial derivatives 
%are vital instruments that 
allow firms to hedge exposure to diverse financial risks, such as price, currency, or interest rate fluctuations. In commodity markets, participants contend with uncertainties stemming from production volumes, demand variability, and transportation logistics.  To address these challenges, a variety of financial derivatives are traded, designed to mitigate risk and stabilize revenues. These instruments often involve complex transactions with multivariate payoffs that replicate potential future cash flows. For example, power producers manage both price and volume risks, while renewable energy producers face weather-driven uncertainties, such as wind or cloud cover. Retailers, on the other hand, contend with temperature-sensitive demand variations. Multivariate derivatives, such as spread and quanto options, are particularly attractive for managing such intertwined risks, although their pricing remains a significant challenge due to the lack of closed-form solutions.

Pricing these products typically relies on numerical approximations of theoretical prices using techniques like Monte Carlo simulations or numerical solutions of partial differential equations. However, the inherent complexity and high dimensionality of such problems demand novel mathematical tools to improve efficiency and accuracy.

In recent years, insights from the theory of rough paths, introduced by Lyons \cite{Lyons1998}, have emerged as a powerful framework for understanding stochastic processes and their functionals. A central concept in this theory is the 
signature, a collection of iterated integrals that captures the fundamental characteristics of a path. Initially studied by %K. T. 
Chen \cite{KTChen54} from an algebraic topology perspective, the signature framework has since evolved into a dynamic field of research, offering new methodologies for modelling and computation in various domains.
More precisely,  for a smooth path $X:[0,T]\rightarrow \RR^d$ we define $\dd X_t = \frac{\dd}{\dd t} X_tdt$, and for $s\leq t$
\begin{equation}\label{iterated integral}
    \bX^{n}_{s,t}:=\int_{s<r_1<\dots<r_n<t} \bigotimes_{i=1}^n \dd X_{r_i}\in (\RR^d)^{\otimes n}. 
\end{equation}
For $p\in \NN$, we define the $p$-truncated signature $\bX^{\leq p}$ by 
\begin{equation*}
    \bX^{\leq p}_{s,t} = \left(1,\bX^{1}_{s,t},\ldots,\bX^{p}_{s,t}\right) \in \{\mathbf{1}\}\otimes \bigotimes_{i=1}^p (\RR^d)^{\otimes i} = T^{p}(\RR^d) 
\end{equation*}
and the  {\em signature} $\bX^{\leq \infty}$ as the limit when $p\rightarrow \infty $. The space $T^{p}(\RR^d)$ is known as the truncated tensor algebra, and we call $T((\RR^d))$ given formally as $T^{\infty}(\RR^d)$, the (extended) tensor algebra. A more detailed introduction will be provided in Section \ref{sec:basics}. 
Beyond smooth paths, the theory extends to stochastic paths, where probabilistic tools enable the construction of these integrals under frameworks like It\^o or Stratonovich integration (see e.g. \cite{Friz_Victoir_2010}).

In many ways, the iterated integral signature can be seen as an infinite-dimensional extension of polynomials. In fact, it shares many of the key features of polynomials, but related to one-parameter functions. Some key properties are 
\begin{enumerate}[label=(\roman*), leftmargin=*]
    \item Chen's relation holds, i.e., the signature is a multiplicative functional. More specifically, for any $u\in [s,t]$ then $\bX_{s,u}\otimes \bX_{u,t}=\bX_{s,t}$. The tensor product is understood in the extended tensor algebra $T((\RR^d))$, which is defined in Section \ref{sec:basics}. This not only provides computational efficiency but also serves as a fundamental building block in rough integration theory. 
    \item The signature uniquely characterizes the path up to tree-like equivalence \cite{hamblylyons2010}. 
    \item  It is re-parametrization invariant; for a monotone increasing function $\phi:[0,T]\rightarrow[0,T]$ with $\phi(0)=0$ and $\phi(T)=T$, and define $\bar{X}_t = X_{\phi(t)}$, then $\bar{\bX}^{\leq \infty}_{0,T}= \bX^{\leq \infty}_{0,T}$ \cite{Friz_Victoir_2010}. 
    \item The signature is invariant to translation; If for some $a\in \RR^d$ we have $Y_t= X_t+a$ for all $t\in[0,T]$, then $\bY = \bX$. 
    \item It is associated with a rich algebraic structure (see e.g. \cite{Friz_Victoir_2010} for an introduction). 
    \item The signature is tightly connected to stochastic integration theory, and naturally encode information about stochastic integration choices, enriching the understanding of stochastic integration theory beyond the classical martingale theory \cite{frizhairer}. 
    \item The signature characterizes the law of stochastic processes under sufficient signature moment decay conditions \cite{ChevyrevOberhauser22}\footnote{One should note that the decay conditions derived in \cite{ChevyrevOberhauser22} lead to rather strong assumptions that are difficult to verify, and excludes several interesting examples, such as the geometric Brownian motion.}. 
\end{enumerate}
 Importantly, the signature serves as a universal approximation basis for continuous functionals on path space, akin to how polynomials approximate real-valued functions. This universal approximation property allows any continuous functional on the space of Lipschitz paths to be approximated arbitrarily well by a linear functional of the signature \cite{cuchiero2023universal}. The universal approximation property is proven through the Stone-Weierstrass theorem, and thus requires a feature set that forms a sub-algebra of the continous functionals on path-space. 

The signature framework extends to stochastic functional approximation. A key challenge lies in encoding the choice of stochastic integration—such as It\^o or Stratonovich—into the functional representation. While the universal approximation property has been extended to geometric rough paths, which naturally align with Stratonovich integration, this leaves a significant gap for It\^o-based financial functionals, which are most prevalent in practice.

This article addresses the challenges of applying signature-based universal approximation to non-geometric rough paths, with a focus on practical implementation in financial markets. By bridging the gap between non-geometric rough paths and universal approximation, we investigate the framework for efficient pricing in the It\^o setting for  complex financial derivatives, providing several examples throughout the text. The remainder of the paper develops the theoretical foundations and explores applications in detail.

\subsection{Main ideas and contribution}

In this  article we present a universal approximation result for functionals of non-geometric rough paths. The main challenge with non-geometric rough paths and by extension the non-geometric signature, is that the multiplication of two elements in the non-geometric signature does not yield another element contained in the same signature. This implies that just considering the linear span of signature elements as the subset of continuous linear functionals to use for functional approximation is not sufficiently rich to become an algebra; a strict requirement of the Stone-Weierstrass theorem. 
To overcome this challenge, we enrich the feature set, the linear span of our signature terms, by   polynomials of the signature terms. This becomes a very large class of features that we use for universal approximation, but provides the sufficient set which guarantees universal approximation in the non-geometric setting. When applying this theorem to {\em geometric} rough paths, the approximation can be written as a linear functional acting on the signature through the shuffle product. 

In finance the universal approximation theorem with signatures has been successfully used in the context of pricing complex financial derivatives, see \cite{arribas2018derivativespricingusingsignature,Arribasetal20,LyonsNejadAribas2019}. Such derivatives typically have a payoff of functional form, in practice often a so-called \say{Asian} structure.  There the payoff depends not on the price at a given time, but on the average price over a time period, thus introducing an integral $\int_a^b X_r \dd r$ with the price being $t\mapsto X_t$. For even more complicated structures, one can encounter derivatives in energy markets, where the payoff is given as an integral over the product of two stochastic processes, $\int_a^b X_rY_r \dd r$, for example representing electricity price and temperature. In even more complicated models, one could imagine compositions of a finite number of such structures. The point is that payoff functionals, mapping paths to prices $X\mapsto F(X)$, are typically given in a very specific form of (lower-ordered) signature functionals. In the examples above, the payoff functional can be seen as continuous function $f$ acting on a finite number of terms in the signature of the price signals, enriched with a time component. 

This motivates a working hypothesis of the paper; we consider functionals that we assume to be given as continuous functions acting on terms from the signature. While being a relevant hypothesis in many practical applications, one can resort to classical function approximation techniques to approximate the continuous function of interest. Approximating this function through some polynomial can under various assumptions yield convergence rates for the approximation, providing theoretical guarantees important for practical implementation. This simplifies the derivatives pricing problem from \cite{arribas2018derivativespricingusingsignature} where machine learning techniques are suggested to find the functional approximation, resulting in an approximation which is difficult to analyze from a practical perspective. In contrast, we believe that our approach, mixing the new universal approximation method, with the hypothesis suited for payoff functionals provides a simple way of using signature features for practical pricing problems arising in financial markets. 

Throughout the article, we illustrate our results and contributions through examples, with an emphasis on derivatives pricing in energy markets where there exists many complex derivatives structures. %, for which the proposed method really shines. 
We emphasise %is also to provide 
deriving rather explicit conditions ensuring  convergence rates for the approximations that we introduce.% can be obtained. 

\subsection{Organization of the paper}

In Section \ref{sec:basics} we provide a basic introduction of the signature and building the algebraic foundation for our analysis. As this article is targeting an audience in financial mathematics, we have chosen to provide a detailed introduction to the algebraic side. %, but this section can be skipped for the readers well versed in signatures and rough paths. 
In Section \ref{sec:universal} we recall the state of the art in universal approximation, and propose the new universal approximation for non-geometric rough paths. In Section \ref{sec:stochastics} we discuss stochastic price paths and the computation of signature correlators. Section \ref{sec:derivatives} combines our considerations and provides some approximation results for financial derivatives. This is highlighted with a discussion on the applications in energy markets. 
At last we provide a conclusion with an outlook to future developments in Section \ref{sec:conclussion}.

\subsection{Notation}\label{sec:notation}
For a complete metric space $(E,d_E)$, we denote by $C([0,T];E)$ the space of continuous paths $X:[0,T]\rightarrow E$ with the uniform topology. The set of continuous paths with finite $p\geq 1$ variation from $[0,T]$ into $E$ is denoted by $C^{p-var}([0,T];E)$, and is equipped with the seminorm 
\begin{equation*}
    \| X\|_{p,[0,T]} := \left( \sup_{P\in \cD} \sum_{t_k\in P} d_E(X_{t_k},X_{t_{k+1}})^p\right)^{\frac{1}{p}}, 
\end{equation*}
where $\cD$ is the collection of all partitions over $[0,T]$. 
Whenever the interval $[0,T]$ under consideration is otherwise clear, we simply write $ \| X\|_{p}= \| X\|_{p,[0,T]}$.
Recall that when $x\in\mathbb N\setminus \{0\}$, the gamma function $\Gamma$ satisfies $\Gamma(x+1)=x!$, and by slightly abuse of notation we will write $x!:=\Gamma(x+1)$ for $x>0$. 
Throughout the article, we will occasionally consider Hilbert spaces, denoted by $H$, and where the inner product is then given by  $\la \cdot,\cdot\ra _H$ and the associated norm is denoted by $|\cdot |_H$.  If the space $H$  under consideration is clear, we dismiss the index and simply write $\la\cdot,\cdot \ra $ and $|\cdot |$ for the inner product and norm.  
For a path $X:[0,T]\rightarrow \RR^d$ we define the time-enhanced path $\hat{X}:[0,T]\rightarrow \RR^{d+1}$, where   $ \hat{X}_t=(t,X_t)$. This notation will be used consistently throughout the article. Frequently, we resort to the notation $\triangle_T=\{(s,t)\in [0,T]^2\mid 0\leq s\leq t\leq T\}$.

\section{Basics of words and signatures}\label{sec:basics}

In this section, we will provide a fundamental overview  of the conventions and concepts related to weighted words, denoted by $\pi$, and signatures, represented by $\bX^{\leq\infty}$, along with their pairing $\la \pi, \bX^{\leq\infty}\ra$.

\subsection{Words}
Given that we are working with $\RR^d$-valued paths $X:[0,T]\rightarrow \RR^d$, the alphabet of our consideration is $\cA=\{1,\dots,d\}$. Throughout this article, $\cA$ will denote the set $\{1,\dots, d\}$ unless otherwise stated.
\begin{defn}
    A \textbf{word} of length $n\in\NN$ is a sequence $w=\mathbf{i}_1\dots\mathbf{i}_n$ where $\mathbf{i}_j\in \cA$ for every $j=1,\dots, n$. We denote by $\cW_n$ the set of all words of length $n\in\NN$. For $n=0$ we have that $\cW_0=\{\emptyset\}$ where $\emptyset$ is the \textbf{empty word}. We further let $\cW$ denote the set of all words.
\end{defn} 
The algebra $\RR\langle\cA\rangle$ is introduced as  $T(\text{vect}_\RR(\cA))$, representing the vector space generated by $\cW$.

\begin{defn} The algebra of all non-commutative polynomials in $\cA$ is defined to be
    \[
\RR\la \cA\ra=\left\{\pi=\sum_{w\in \cW}\alpha_ww\mid \alpha_w\in\RR,\; \alpha_w\neq 0\text{ for a finite number of }w\in \cW\right\}.
\] 
We refer to $\pi\in \RR\la \cA\ra$ as a \textbf{weighted word.}
\end{defn} 
$\RR\la \cA\ra$ forms an algebra with respect to \textbf{concatenation} which for two words $w=\mathbf{i}_1\dots \mathbf{i}_n$ and $w'=\mathbf{j}_1\dots \mathbf{j}_m$ is given by $w\cdot w'=ww'=\mathbf{i}_1\dots \mathbf{i}_n\mathbf{j}_1\dots \mathbf{j}_m.$ By letting concatenation be bilinear, we can define concatenation of weighted words. See Example \ref{Ex:concatenation} below.

\begin{ex}\label{Ex:concatenation}
    Suppose we work with an alphabet given by $\cA=\{\mathbf{a,b,c}\}$.  Then we have that \[\cW=\{\emptyset,\mathbf{a,b,c,aa,ab,ac},\dots\},\] where $\mathbf{ab}\neq \mathbf{ba}$. As an example, one element $\pi\in \RR\langle \cA\rangle$ is \[\pi=  \emptyset+2\mathbf{a}+\sqrt{3}\mathbf{ab}+\cdots+100 \mathbf{abcbac}.\] Moreover, the concatenation of the weighted words $(3\mathbf{ab+a})$ and $(\mathbf{b+c})$ is \[(3\mathbf{ab+a})\cdot(\mathbf{b+c})=3\mathbf{abb}+3\mathbf{abc+ab+ac}.\]
    Note that we use alphabets consisting of the natural numbers up to dimensions $d$ in this paper. However, to separate clearly between the weights $a_w$ and the words $w$, we used letters for the alphabet in this example.
\end{ex}

\begin{rem}\label{remrecursive}
    In the next Subsection, we will introduce the concept of signatures. However, it is worth noting that the signature $\bX^{\leq\infty}$ of a path $X:[0,T]\rightarrow \RR^d$ can also be defined recursively through the projections of words. 
    Since the path is $\RR^d$-valued, let $\cW$ denote the collection of words formed from the alphabet $\cA = \{1,\dots,d\}$. We can regard the signature $\bX^{\leq\infty}$ as an element of $C(\triangle_T)^\cW$. To formalize this, we define it recursively using the projections $\la w, \bX^{\leq\infty}\ra$ for $w \in \cW$, as follows:

    First, for the empty word, we set $\la \emptyset, \bX^{\leq\infty}_{s,t} \ra := 1$. For any non-empty word $w = \mathbf{i}_1 \dots \mathbf{i}_n \in \cW \setminus \{\emptyset\}$, we define the projection as:
    \[
    \la w, \bX^{\leq\infty}_{s,t} \ra := \int_s^t \la \mathbf{ i}_1 \dots \mathbf{i}_{n-1}, \bX^{\leq\infty}_{s,r} \ra \, dX^{\mathbf{i_n}}_r.
    \]
\end{rem}

\subsection{Signatures}
In this Subsection, we introduce the basic concepts of signatures. We begin by presenting key algebras and a Hilbert space that plays a central role in the theory of signatures. Next, we define what a signature is, and finally, we demonstrate how to pair a weighted word with a signature. 
Let us start with the setup.
Let $H$ denote a general $\FF$-Hilbert space, where the field $\FF$ is either $\RR$ or $\CC$. We need to introduce a triplet of spaces $T(H)\subset \cF(H)\subset T((H))$. 
First of all, by convention we let $H^{\otimes 0}=\FF,$ and throughout this article $\otimes$ denotes the Hilbert space tensor product. 
\begin{defn}
    For a Hilbert space $H$ we call \[T((H))=\prod_{n=0}^\infty H^{\otimes n}=\{x=(x_0,x_1,\dots,x_n,\dots)\mid x_n\in H^{\otimes n},\, \text{for}\,n=0,1,2,...\}. \] the \textbf{extended tensor algebra}.
\end{defn} As we will see later, any well-defined signature naturally belongs to the extended tensor algebra $T((H))$. Another space closely linked to the theory of words and signatures is the tensor algebra, which we introduce next: 
\begin{defn}
    The \textbf{tensor algebra} is given by the algebraic direct sum \[T(H)=\bigoplus_{n=0}^\infty H^{\otimes n}=\{x\in T((H))\mid\,\forall\; x\;\exists\; N\in\NN \text{ such that }x_n=0\;\forall n\geq N\}. \] 
\end{defn}
We equip $T((H))$ with a sum $+$, scalar multiplication, and a product $\otimes$. These operations are defined as follows:  For elements $x=(x_0,x_1,\dots,x_n,\dots)$ and $y=(y_0,y_1,\dots,y_n,\dots)$ in $T((H))$, we define the the sum of $x$ and $y$ element-wise, i.e.,  \[x+y=(x_0+y_0,x_1+y_1,\dots,x_n+y_n,\dots).\] Scalar multiplication is also defined element-wise as \[\lambda x=(\lambda x_0,\lambda x_1,\dots),\quad\lambda\in \RR.\] Lastly, the product of $x$ and $y$ is given by 
\[
x\tilde\otimes y = (z_0,z_1,\dots,z_n,\dots)\quad \mathrm{where} \quad z_n=\sum_{k=0}^nx_k\otimes y_{n-k}.
\] 
These operations turn the tensor algebra $T(H)$ into an algebra. Moreover, we denote by \[T^N(H)=\{x\in T(H)\mid x_n=0\;\forall n> N\}\]
the $N$-truncated tensor algebra. It becomes an algebra with respect to $\tilde\otimes$.

\begin{rem}\label{rem1} We present next two useful facts about the tensor algebra and the extended tensor algebra when $H=\RR^d$. First, we have the isomorphisms
\[T(\RR^d)\simeq T((\RR^d)^*)\simeq \RR\langle\cA\rangle,\] see e.g. \cite{Lang2002}. Thus, every word $\pi\in \RR\la\cA\ra$ can be viewed as an element in $T(\RR^d).$ Secondly,
we have a dual pairing between $T(\RR^d)$ and $T((\RR^d))$, see e.g. \cite{Lyons2007}, because of the fact that 
\[
T((\RR^d)^*)\simeq T((\RR^d))^*
\] where $T((\RR^d))^*$ denotes the algebraic dual of $T((\RR^d))$. Hence for $\pi\in T(\RR^d)$ and $x\in T((\RR^d))$ we denote by $\la \pi,x\ra$ the algebraic dual pairing between $\pi$ and $x$.

Putting these two facts together, we find that
    \[
    \RR\la \cA\ra \simeq T((\RR^d))^*
    \]
    and therefore every weighted word $\pi\in \RR\la \cA\ra$ can be viewed as a linear functional on the extended tensor algebra, $T((\RR^d))$.
\end{rem}

Now we want to introduce a Hilbert space for which a big class of signatures belongs to, called the full Fock space. For $x=x_1\otimes \dots\otimes x_n$, $y=y_1\otimes \dots\otimes y_n\in H^{\otimes n}$ we consider an inner product on $H^{\otimes n}$ by \[\langle x,y\rangle_n=\prod_{i=1}^n\langle x_i,y_i\rangle_H.\]
Hence, the norm on $H^{\otimes n}$ becomes $|x|_n^2=\prod_{i=1}^n|x_i|^2_H$. 
Then for $x=(x_n),y=(y_n)\in \bigoplus_{n=0}^\infty H^{\otimes n},$ where $H^{\otimes 0}=\FF,$ we can define an inner product by\[
\langle x,y\rangle_{\cF(H)}=\sum_{n=0}^\infty \langle x_n,y_n\rangle_n, \]
and hence a norm \[\|x\|_{\cF(H)}^2=\sum_{n=0}^\infty |x_n|_n^2.\] The \textbf{full Fock space} over $H$ is the Hilbert space given by the topological direct sum, $\widehat{\bigoplus}$, 

\[
\cF(H):=\widehat{\bigoplus}_{n=0}^\infty H^{\otimes n}=\left\{x=(x_n)\mid x_n\in H^{\otimes n},\, n\in\NN,\,\|x\|_{\cF(H)}^2<\infty \right\}.
\]Moreover, $T(H)$ is a dense subspace in $\cF(H)$. 
\begin{ex}
\label{ex:word-functional-identification}
    Recall that for $\cA=\{1,\dots,d\}$ we have that $\RR\la\cA\ra\simeq T(\RR^d)$ and therefore any $\pi\in\RR\la\cA\ra $ can be recognized as an element $e_\pi\in T(\RR^d)$. Moreover, since $T(\RR^d)$ is a (dense) subspace of $\cF(\RR^d)$, we can compute the Fock norm of a weighted word $\pi\in \RR\la\cA\ra.$  For example, let $d=2$ so that $\cA=\{1,2\}$ and consider the weighted word
    \[\pi = 2\emptyset + 3\cdot \mathbf{1}+\mathbf{12}.\] Then \[e_\pi=2e_\emptyset+3e_1+e_1\otimes e_2=(2e_\emptyset,3e_1,e_1\otimes e_2)\] 
    where $e_{\emptyset}=1$ is the basis vector in the field $\mathbb R$ while
    $e_1,e_2\in\mathbb R^2$ are the basis vectors in $\mathbb R^2$. Hence, we get that
    \[ \|\pi\|^2_{\cF(\RR^d)}:=\|e_\pi\|^2_{\cF(\RR^d)}=2^2|e_\emptyset|^2+3^2|e_1|^2+|e_1|^2|e_2|^2=4+9+1=14.
    \]
\end{ex}

Next, we recall the definition of a multiplicative functional, a fundamental object in the theory of rough paths \cite{Lyons2007}:
\begin{defn}\label{def:p-var}
    A \textbf{multiplicative functional} of degree $N\in\NN$ is a continuous map \[\triangle_T\ni (s,t)\mapsto \mathbf{X}_{s,t}=(1, X^1_{s,t},\dots, X^N_{s,t})\in T^N(H)\] satisfying \textit{Chen's identity:}
\[\mathbf{X}_{s,t}=\mathbf{X}_{s,u}\tilde\otimes \mathbf{X}_{u,t}\quad \forall\; 0\leq s\leq u\leq t\leq T.\]
\end{defn}

For $p\geq1$, a multiplicative functional $\mathbf{X}=(1,X^1,\dots,X^N)$ of degree $N\in\NN$ is said to have \textbf{finite $p$-variation} if  
\[
\|\mathbf{X}\|_{p,[0,T]}:=\max_{0\leq n\leq N}\sup_{P\in\cD}\sum_{t_k\in P}|X^n_{t_k, t_{k+1}}|_n^{p/n}<\infty
\]
and $\cD$ is the collection of all finite partitions over $[0,T]$, and $|\cdot|_n=\prod_{i=1}^n|\cdot|_H$ is the canonical Hilbert tensor norm on $H^{\otimes n}$. \footnote{Note that this tensor norm is not properly defined, but is well-defined for pure tensors and extends linearly for arbitrary elements.}

\begin{defn}\label{def:rough path}
    Let $p\geq 1$ and let $\mathbf{X}_{s,t}=(1,X^1_{s,t},\dots,X^{\lfloor p\rfloor} _{s,t})$ be a multiplicative functional of degree $\lfloor p\rfloor \in\NN$. We call $\mathbf{X}$ a \textbf{$p$-rough path} if $\|\mathbf{X}\|_{p,[0,T]}<\infty.$
\end{defn}

The set of $p$-rough paths becomes a complete metric space with $d(\bX,\bY)=\|\bX-\bY\|_{p,[0,T]}$.

\begin{thm}\label{thm:lyons-extension}[Lyons' extension theorem \cite{Lyons2007}]
    Let $\mathbf{X}$ be a p-rough path. Then for any $n\geq \lfloor p\rfloor +1$ there exists a unique continuous map
    \[X^n:\triangle_T\rightarrow H^{\otimes n}\] such that \[\bX^{\leq \infty}=(1,X^1,\dots,X^{\lfloor p\rfloor},\dots,X^n,\dots):\triangle_T\rightarrow T((H))\] is a multiplicative functional with finite $p$-variation. We call $\bX^{\leq \infty}$ the \textbf{signature} of $\mathbf{X}$. Moreover, we have that \[|X^k_{s,t}|_k\leq \frac{\|\mathbf{X}\|_{p,[s,t]}^k}{\beta(p)(k/p)!}\] where $|\cdot|_k$ is the canonical Hilbert space norm on $H^{\otimes k}$ and $\beta(p)$ is a constant only dependent of $p$.
\end{thm}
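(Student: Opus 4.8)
The plan is to construct the higher-order terms $X^n$, for $n\geq\lfloor p\rfloor+1$, one degree at a time by induction, exploiting that $\mathbf{X}^{\leq n-1}=(1,X^1,\dots,X^{n-1})$ is already a multiplicative functional of degree $n-1$ satisfying the asserted factorial estimate (the base case $n-1=\lfloor p\rfloor$ being the hypothesis of the theorem). Throughout write $\omega(s,t):=\|\mathbf{X}\|_{p,[s,t]}^p$; this is a continuous, superadditive control with $\omega(s,t)^{k/p}=\|\mathbf{X}\|_{p,[s,t]}^k$. Introduce the almost-multiplicative functional $\mu_{s,t}:=(1,X^1_{s,t},\dots,X^{n-1}_{s,t},0)$, the known functional truncated at degree $n$ with vanishing top term. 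By Chen's identity for $\mathbf{X}^{\leq n-1}$, the components of $\mu_{s,u}\otimes\mu_{u,t}$ and of $\mu_{s,t}$ agree in degrees $0,\dots,n-1$, while in degree $n$ they differ by
\[
\sum_{i=1}^{n-1}X^i_{s,u}\otimes X^{n-i}_{u,t},
\]
whose $|\cdot|_n$-norm is bounded, via the inductive estimate, by a constant multiple of $\sum_{i=1}^{n-1}\frac{\omega(s,u)^{i/p}\omega(u,t)^{(n-i)/p}}{(i/p)!\,((n-i)/p)!}$, hence — using $n/p>1$ together with a binomial-type (neo-classical) inequality — by a constant multiple of $\frac{(\omega(s,u)+\omega(u,t))^{n/p}}{(n/p)!}$. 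For a partition $P=\{s=t_0<\dots<t_m=t\}$ of $[s,t]$, set $\mathbf{X}^P_{s,t}:=\mu_{t_0,t_1}\otimes\cdots\otimes\mu_{t_{m-1},t_m}$; its components in degrees $\leq n-1$ are independent of $P$, and we write $X^{n,P}_{s,t}$ for its degree-$n$ component.

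The core step is to show $X^{n,P}_{s,t}$ converges as the mesh of $P$ tends to $0$, with a uniform bound. The standard mechanism: if $P$ has $m\geq2$ subintervals, then the intervals $(t_{j-1},t_{j+1})$ with $j$ of a fixed parity are disjoint, so superadditivity of $\omega$ yields an index $j$ with $\omega(t_{j-1},t_{j+1})\leq\frac{2}{m-1}\omega(s,t)$; removing $t_j$ from $P$ alters $X^{n,P}$ by exactly the degree-$n$ defect above applied to the merged interval, hence by at most a constant times $\frac{1}{(n/p)!}\big(\frac{2}{m-1}\big)^{n/p}\omega(s,t)^{n/p}$. Removing points one at a time down to the trivial partition (for which $X^{n,P}=0$) and summing the series $\sum_{m\geq2}\big(\frac{2}{m-1}\big)^{n/p}$, convergent because $n/p\geq(\lfloor p\rfloor+1)/p>1$, shows $\{X^{n,P}\}$ is Cauchy and that $X^n_{s,t}:=\lim_{|P|\to0}X^{n,P}_{s,t}$ exists with $|X^n_{s,t}|_n\leq\frac{\omega(s,t)^{n/p}}{\beta(p)(n/p)!}$, where $\beta(p)$ is extracted from the accumulated constants and is fixed once and for all, depending only on $p$ and not on $n$, so that the inductive bound reproduces itself. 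Continuity of $X^n$ on $\triangle_T$ follows from continuity of $\mu$ and uniformity of the convergence.

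It remains to verify multiplicativity and uniqueness. For multiplicativity, observe that the defining limit may be taken along partitions containing any fixed $u\in(s,t)$, for which $\mathbf{X}^P_{s,t}=\mathbf{X}^{P\cap[s,u]}_{s,u}\otimes\mathbf{X}^{P\cap[u,t]}_{u,t}$; letting the mesh go to $0$ gives Chen's identity for $\mathbf{X}^{\leq n}:=(\mathbf{X}^{\leq n-1},X^n)$, so $\mathbf{X}^{\leq n}$ is a multiplicative functional of degree $n$ with finite $p$-variation, the induction proceeds, and the $\mathbf{X}^{\leq n}$ assemble to $\mathbf{X}^{\leq\infty}$ in the projective limit over $n$. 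For uniqueness, if $\mathbf{Y}$ and $\mathbf{Z}$ are two multiplicative extensions of finite $p$-variation and $m$ is the lowest degree in which they differ, Chen's identity forces $(s,t)\mapsto Y^m_{s,t}-Z^m_{s,t}$ to be additive, while finite $p$-variation of both forces it to have finite $(p/m)$-variation; since $p/m<1$, an additive map of finite $q$-variation with $q<1$ vanishes, a contradiction.

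The main obstacle is the quantitative convergence estimate with the sharp factorial decay. One has to choose the point dropped from a partition via the pigeonhole bound on the superadditive $\omega$, control the tail $\sum_m(2/(m-1))^{n/p}$, reconcile it with the neo-classical inequality that turns $\sum_i 1/\big((i/p)!\,((n-i)/p)!\big)$ into $1/(n/p)!$, and calibrate the single constant $\beta(p)$ so that the inductive hypothesis closes for every $n$ simultaneously. This combinatorial bookkeeping is the delicate part; establishing continuity, multiplicativity, and uniqueness is by comparison routine.
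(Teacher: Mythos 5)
This theorem is imported from \cite{Lyons2007} and the paper offers no proof of its own, so there is nothing internal to compare against; what you have written is a faithful reconstruction of the standard argument of Lyons (1998) and Lyons--Caruana--L\'evy. All the essential ingredients are present and correctly deployed: the inductive construction degree by degree, the almost-multiplicative functional obtained by padding with zero, the control $\omega=\|\mathbf{X}\|_p^p$, the neoclassical inequality to collapse $\sum_i \omega(s,u)^{i/p}\omega(u,t)^{(n-i)/p}/((i/p)!((n-i)/p)!)$ into $(\omega(s,u)+\omega(u,t))^{n/p}/(n/p)!$, the pigeonhole selection of a removable partition point via superadditivity, the convergence of $\sum_m (2/(m-1))^{n/p}$ for $n/p>1$, multiplicativity by passing to the limit along partitions through a fixed $u$, and uniqueness via the fact that an additive continuous increment of finite $q$-variation with $q<1$ vanishes. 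The one place where the sketch is genuinely thinner than the full argument is the passage from the uniform bound on $X^{n,P}$ to actual convergence: collapsing a single partition down to the trivial one gives boundedness, but the Cauchy property requires comparing two arbitrary partitions through a common refinement and running the same point-removal estimate localized to each subinterval of the coarser one. You flag this bookkeeping as the delicate part, which is fair; it is exactly where the published proofs spend their effort, and your outline of how to close it (and how to calibrate $\beta(p)$ uniformly in $n$ so the induction reproduces itself) is the right one.
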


\begin{defn}
    We denote by
    \[V^p(\triangle_T,T^{\lfloor p\rfloor}(\RR^d))=\left\{\mathbf{X}^{\leq p}:\triangle_T\rightarrow T^{\lfloor p\rfloor}(\RR^d)\mid \mathbf{X}^{\leq p}\text{ is a $p$-rough path}\right\}.\] Moreover, the well-defined map $S:V^p(\triangle_T,T^{\lfloor p\rfloor}(\RR^d))\rightarrow V^p(\triangle_T,T((\RR^d)))$ given by\[S(\mathbf{X}^{\leq p})=\mathbf{X}^{\leq \infty}, \]is called the \textbf{signature map}.
\end{defn}

\begin{defn}[Geometric rough paths, \cite{Lyons2007}]\label{def:geometric} A \textbf{geometric $p$-rough path} is a $p$-rough path that can be expressed as a limit of $1$-rough paths in the $p$-variation distance. The space of geometric $p$-rough paths in $H$ is denoted by $G^p(H)$.
    
\end{defn}

We now present a useful theorem: the signature of a $p$-rough path is an element of the Fock space, instead of the entire extended tensor algebra, $T((H))$.  To this end, let \[E_{\alpha,\beta}(z)=\sum_{n=0}^\infty \frac{z^n}{(\alpha n+\beta)!},\quad \alpha,\beta\in \CC,\; \mathfrak{R}(\alpha),\mathfrak{R}(\beta)>0,\;z\in \CC\] be the \textit{Mittag-Leffler function}.

\begin{cor}\label{cor1}
    For $p\geq1$ let $\mathbf{X}^{\leq p}$ be a $p$-rough path. We have that $\bX^{\leq \infty}_{s,t}\in\cF(H)$ for any $(s,t)\in \triangle_T$.
\end{cor}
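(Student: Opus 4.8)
The plan is to estimate the Fock-space norm of the signature coordinate by coordinate, using the quantitative tail bound from Lyons' extension theorem, and then to recognise the resulting numerical series as a Mittag-Leffler-type entire function evaluated at a finite argument.

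First I would fix $(s,t)\in\triangle_T$ and put $C:=\|\mathbf{X}\|_{p,[s,t]}$. Since $\mathbf{X}$ has finite $p$-variation on $[0,T]$ and the supremum defining $\|\cdot\|_{p,[s,t]}$ runs over a subfamily of the partitions of $[0,T]$, we have $C\le\|\mathbf{X}\|_{p,[0,T]}<\infty$. By Theorem \ref{thm:lyons-extension} the signature $\bX^{\leq\infty}_{s,t}=(1,X^1_{s,t},X^2_{s,t},\dots)$ is well defined and
\[
|X^k_{s,t}|_k\le\frac{C^{k}}{\beta(p)\,(k/p)!}\qquad\text{for all }k\ge\lfloor p\rfloor+1,
\]
while the finitely many coordinates $X^0_{s,t}=1,X^1_{s,t},\dots,X^{\lfloor p\rfloor}_{s,t}$ have finite norm by Definition \ref{def:rough path}. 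Therefore
\[
\|\bX^{\leq\infty}_{s,t}\|_{\cF(H)}^2=\sum_{k=0}^\infty|X^k_{s,t}|_k^2\le\sum_{k=0}^{\lfloor p\rfloor}|X^k_{s,t}|_k^2+\frac{1}{\beta(p)^2}\sum_{k=0}^\infty\frac{(C^2)^k}{\bigl((k/p)!\bigr)^2},
\]
so it suffices to show that the power series $z\mapsto\sum_{k\ge0}z^k/((k/p)!)^2$ converges at $z=C^2$.

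This last point is the only place where real work is needed: one must control the superfactorial growth of $(k/p)!=\Gamma(1+k/p)$. By Stirling's formula $\Gamma(1+k/p)$ grows faster than $r^{k}$ for every $r>0$, so $((k/p)!)^{-2/k}\to0$ and Cauchy--Hadamard shows the series is entire; concretely, the Legendre duplication formula applied to $\Gamma$ gives $((k/p)!)^2\ge c\,2^{-2k/p}(2k/p)!$ for $k\ge p$ and a universal constant $c>0$, and using $1+2k/p\le e^{2k/p}$ one bounds the tail of the series by a constant multiple of $E_{2/p,1}\!\bigl(e^{2/p}4^{1/p}C^2\bigr)$; since $E_{\alpha,\beta}$ is entire whenever $\mathfrak{R}(\alpha),\mathfrak{R}(\beta)>0$, this quantity is finite. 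Hence $\|\bX^{\leq\infty}_{s,t}\|_{\cF(H)}^2<\infty$, i.e. $\bX^{\leq\infty}_{s,t}\in\cF(H)$, and since $(s,t)\in\triangle_T$ was arbitrary we are done. I expect the write-up to follow the duplication-formula route, as the statement introduces the Mittag-Leffler function immediately beforehand precisely for this purpose; everything else (the subinterval bound $C<\infty$ and the separate, trivial treatment of the $k\le\lfloor p\rfloor$ terms) is bookkeeping.
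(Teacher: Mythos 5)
Your proof is correct and follows essentially the same route as the paper: both rest on the factorial decay $|X^k_{s,t}|_k \le \|\mathbf{X}\|_{p,[s,t]}^k/\bigl(\beta(p)(k/p)!\bigr)$ from Lyons' extension theorem together with the entirety of a Mittag-Leffler-type series. The paper sidesteps your duplication-formula step by bounding the $\ell^2$-type Fock norm by the $\ell^1$-sum $\sum_{n\ge 0}|X^n_{s,t}|_n \le E_{1/p,0}(\|\mathbf{X}\|_{p,[s,t]})/\beta(p)$, which is already finite.
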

\begin{proof} 
From Lyons' extension theorem \ref{thm:lyons-extension}, we have that $\mathbf X^{\leq\infty}_{s,t}\in T((H))$, and, moreover, 
\begin{align*}
        \|\bX^{\leq \infty}_{s,t}\|_{\cF(H)}=\sum_{n=0}^\infty |X^n_{s,t}|_n \leq \sum_{n=0}^\infty \frac{\|\mathbf{X}\|_{p,[s,t]}^n}{\beta(p)(n/p)!} \leq  \frac{E_{1/p,0}(\|\mathbf{X}\|_{p,[s,t]})}{\beta(p)} <\infty.
    \end{align*}
    The result follows. 
\end{proof}

We wish to utilize the dual pairing in Remark \ref{rem1} between weighted words $\pi\in \RR\la \cA\ra$ and signatures $\bX^{\leq\infty}_{s,t}\in T((\RR^d))$. Hence, from now on we consider the case when $H=\RR^d$. Given an $\RR^d$-valued path $\mathbf X$ for which the signature $\bX^{\leq \infty}_T:=\bX^{\leq \infty}_{0,T}\in T((\RR^d))$ is well-defined, and a word $w=\mathbf{i}_1\dots \mathbf{i}_n\in\cW_n$, we can identify $w$ as an element $e_w\in T(\RR^d)\simeq T((\RR^d))^*$ and we have the following symbolic definition of the dual pairing of a word and a signature
\begin{equation}
\label{basic-def-word-sig}
\langle w,\bX^{\leq \infty}_T\rangle:=\langle e_w,\bX^{\leq \infty}_T\rangle
\end{equation}
where $e_w =  e_{\mathbf{i}_1}\otimes \dots \otimes e_{\mathbf{i}_n}$, and $e_{n}$ is the $n$th unit vector in $\RR^d$.    
If the underlying path $X$ is continuous and of bounded variation, then the right hand side of \ref{basic-def-word-sig} is well-defined in the iterated Riemann-Stieltjes sense as follows 
\begin{equation}
   \langle w,\bX^{\leq \infty}_T\rangle =\underset{0<u_i<\dots<u_n<T}{\int\dots\int}dX_{u_1}^{\mathbf{i_1}}\dots dX_{u_n}^{\mathbf{i_n}}.
\end{equation}

Here and in the sequel of this paper we shall use the generic notation $\pi$ to signify a (finite) linear combination of elements $e_w$, or, after identification, a finite linear combination of words (as in the Example \ref{ex:word-functional-identification} above). Indeed, if $\pi=\sum_{w\in\cW}a_ww$ for $w\in\cW$ and $a_w\in\mathbb{R}\setminus\{0\}$ for finitely many $w\in\cW$, then,
\begin{equation*}
\langle\pi,\bX^{\leq \infty}_T\rangle=\sum_{w\in\cW}a_w\langle w,\bX^{\leq \infty}_T\rangle,
\end{equation*}
with $\langle w,\bX^{\leq \infty}_T\rangle$ given as in \eqref{basic-def-word-sig}.

The shuffle product is turning $T(\mathbb R^d)$ into an algebra, and is very convenient and useful in operating with products of signatures. Indeed, the shuffle product is the key operation in the rough path theory that linearises nonlinear functionals, at least approximately, as can be seen in the 
Universal Approximation Theorem (as we recall for the  convenience of the reader in Proposition \ref{prop:universal approximation}).  We define the shuffle product $\shuffle$ next: The shuffle product between two words gives a weighted word which is constructed by taking a linear combination of all the different ways to combine two words while preserving their own orders. As an example, if we shuffle together the words $\mathbf{ab}$ and $\mathbf{c}$, we get 
\[
\mathbf{ab}\shuffle \mathbf{c=abc+acb+cab}.
\]
Note that $\mathbf{bac}, \mathbf{bca}$ and $\mathbf{cba}$ is  not in the sum on the right hand side since it violates the order of $\mathbf{ab}$.

Below follows an important Lemma for products of signatures and the shuffle product: 

    \begin{lem}[Shuffle property, \cite{Lyons2007}]\label{lemma:shuffle}
    Let $\pi,\pi'\in T(\RR^d)$ and let $\mathbf{X}$ be a geometric rough path, according to Definition \ref{def:geometric}. Then the following identity holds
    \[
    \langle\pi,\bX^{\leq \infty}_{0,T}\rangle\langle\pi',\bX^{\leq \infty}_{0,T}\rangle=\langle\pi\shuffle \pi',\bX^{\leq \infty}_{0,T}\rangle.
    \]
\end{lem}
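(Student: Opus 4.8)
The plan is to peel the statement down to single words, establish it for paths of bounded variation by an induction on word length organised around integration by parts, and then transfer it to geometric rough paths by density.

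First I would reduce to words. Since $\la\cdot,\bX^{\leq \infty}_T\ra$, the shuffle product $\shuffle$, and ordinary multiplication are all bilinear, it suffices to prove
\[
\la w,\bX^{\leq \infty}_T\ra\,\la w',\bX^{\leq \infty}_T\ra=\la w\shuffle w',\bX^{\leq \infty}_T\ra
\]
for arbitrary words $w,w'\in\cW$. I would also record that both sides are continuous functions of the underlying rough path in the $p$-variation topology: the extension map $\bX\mapsto\bX^{\leq \infty}$ is continuous from $p$-rough paths to signatures (the continuity counterpart of Lyons' extension theorem \ref{thm:lyons-extension}), so its composition with the fixed functional $e_w$ is continuous, and sums and products of continuous maps are continuous. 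Hence, by Definition \ref{def:geometric}, since every geometric $p$-rough path is a $p$-variation limit of $1$-rough paths, it suffices to prove the word identity for $\bX^{\leq \infty}$ the signature of a continuous path $X:[0,T]\to\RR^d$ of bounded variation.

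Next I would run the induction. Fixing such an $X$, I claim that
\[
\la w,\bX^{\leq \infty}_{0,t}\ra\,\la w',\bX^{\leq \infty}_{0,t}\ra=\la w\shuffle w',\bX^{\leq \infty}_{0,t}\ra\qquad\text{for all }t\in[0,T],\ w,w'\in\cW,
\]
by induction on $|w|+|w'|$. The case $w=\emptyset$ or $w'=\emptyset$ (which includes the base case) is immediate from $\la\emptyset,\bX^{\leq \infty}_{0,t}\ra=1$ and $\emptyset\shuffle w'=w'$. For the inductive step I write $w=u\mathbf{a}$ and $w'=v\mathbf{b}$ with $\mathbf{a},\mathbf{b}\in\cA$. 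By Remark \ref{remrecursive} the maps $t\mapsto\la w,\bX^{\leq \infty}_{0,t}\ra$ and $t\mapsto\la w',\bX^{\leq \infty}_{0,t}\ra$ are continuous, of bounded variation, vanish at $t=0$, and have differentials $\mathrm d\la w,\bX^{\leq \infty}_{0,t}\ra=\la u,\bX^{\leq \infty}_{0,t}\ra\,\mathrm dX^{\mathbf a}_t$ and $\mathrm d\la w',\bX^{\leq \infty}_{0,t}\ra=\la v,\bX^{\leq \infty}_{0,t}\ra\,\mathrm dX^{\mathbf b}_t$. Applying the Riemann--Stieltjes product rule and then the induction hypothesis to the integrands (whose combined word length is $|w|+|w'|-1$) yields
\[
\la w,\bX^{\leq \infty}_{0,t}\ra\,\la w',\bX^{\leq \infty}_{0,t}\ra=\int_0^t\la u\shuffle w',\bX^{\leq \infty}_{0,r}\ra\,\mathrm dX^{\mathbf a}_r+\int_0^t\la w\shuffle v,\bX^{\leq \infty}_{0,r}\ra\,\mathrm dX^{\mathbf b}_r,
\]
which by Remark \ref{remrecursive} equals $\la (u\shuffle w')\mathbf a+(w\shuffle v)\mathbf b,\bX^{\leq \infty}_{0,t}\ra$. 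The recursive description of the shuffle gives $(u\shuffle w')\mathbf a+(w\shuffle v)\mathbf b=(u\mathbf a)\shuffle(v\mathbf b)=w\shuffle w'$, so the induction closes; setting $t=T$ proves the word identity for bounded variation $X$.

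Finally I would conclude by density: choosing $1$-rough paths $\bX^{(k)}\to\bX$ in $p$-variation, applying the identity just established to each $\bX^{(k)}$, and letting $k\to\infty$ using the continuity noted in the first step, the word identity holds for every geometric $p$-rough path; bilinearity then upgrades it to arbitrary $\pi,\pi'\in T(\RR^d)$. I expect the induction step to be the main piece of bookkeeping — in particular, justifying that Remark \ref{remrecursive} genuinely yields bounded-variation paths to which the product rule applies, and matching the shuffle recursion $w\shuffle w'=(u\shuffle w')\mathbf a+(w\shuffle v)\mathbf b$ with the combinatorial definition in the text. The only genuinely structural point is the transfer to geometric rough paths, which is precisely where the hypothesis that $\bX$ is geometric (Definition \ref{def:geometric}) enters and cannot be removed.
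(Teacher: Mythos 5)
Your proof is correct, but note that the paper does not prove Lemma \ref{lemma:shuffle} at all --- it is quoted directly from \cite{Lyons2007}, so there is no in-paper argument to compare against. Your route (bilinear reduction to words, induction on $|w|+|w'|$ via integration by parts for bounded-variation paths, then transfer to geometric rough paths by density and continuity of the extension map) is precisely the classical proof found in the cited reference; the only point you rightly flag as needing care is that the continuity of $\bX\mapsto\la w,\bX^{\leq\infty}\ra$ in $p$-variation for words longer than $\lfloor p\rfloor$ is not contained in the paper's statement of Theorem \ref{thm:lyons-extension} and must be imported from the full (locally Lipschitz) form of Lyons' extension theorem.
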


Interestingly, one can link shuffle product monomials to monomials of the signatures: given $\pi\in T(\RR^d)$ we define  $\pi_n\in T(\RR^d)$ for every $n\in \NN$ by
\[
\pi_0=\pi_\emptyset,\quad\pi_1=\pi, \quad\pi_2=\pi_1\shuffle \pi_1,\quad\pi_n=\pi_{n-1}\shuffle\pi_1=\overbrace{\pi_1\shuffle \dots\shuffle\pi_1 }^{\text{n-times}}.
\]
Then by the shuffle property we have that 
\[
\langle \pi_n,\bX^{\leq \infty}_{0,T}\rangle=\langle \pi_{n-1},\bX^{\leq \infty}_{0,T}\rangle\langle \pi_1,\bX^{\leq \infty}_{0,T}\rangle=\langle \pi,\bX^{\leq \infty}_{0,T}\rangle ^n.
\]\label{shuffletrick}
This gives a convenient link between monomials of signatures and monomials of words.

\begin{rem}
    Note that the shuffle property can be seen as an extension of the \say{integration by parts} property of Riemann integrals (when the underlying paths are continuously differentiable). Indeed, observe that for
    \[
    X_t^i X_t^j -X_0^i X_0^j = \int_0^t \int_0^r \dd X_u^i \dd X_r^j + \int_0^t \int_0^r \dd X_u^j \dd X_r^i 
    \]
    where we now interpret $\dd X_t = \frac{\dd}{\dd t}X_t\dd t.  $
\end{rem}

In later sections we will discuss in more detail time-extended paths $\hat{X}_t=(t,X_t)$ as first defined in Section \ref{sec:notation}.
Suppose there exists now a signature $\bX^{\leq \infty}$ above the path $X$. Then the extension to the signature over $t\mapsto \hat{X}_t$ can be done canonically. Indeed, since $t\mapsto t$ is smooth, the integrals 
\[
\int_0^t r \dd X_r, \quad \mathrm{and} \quad \int_0^t X_r \dd r, 
\]
can both be constructed as Riemann-Stieltjes integrals, since $X$ is assumed to be continuous. This argument however is not as simple in the setting where $X$ is rough, as the canonical construction of the rough paths is no longer clear. When we are starting out with a geometric rough path $\bX^{\leq p}$, then one can show that also the time augmented rough path $\hat{\bX}^{\leq p}$ will be geometric. This provides an important feature in the later proof of universality, where one typically depends on an integration by parts trick \cite{cuchiero2023universal}. As we are interested in non-geometric rough paths we need a deeper hypothesis on the structure of the rough path in order to prove later point separability in connection with universal approximation.

\subsection{Non-geometric rough paths}
In subsequent sections we will make use of non-geometric rough paths. In principle, these are rough paths given directly through Definition \ref{def:rough path}. However, we require some extra discussion on this point, as the lack of geometric nature of the rough paths at hand creates some challenges when working with them. When working with universal approximation of functionals, we need an injectivity argument, in the same spirit as used in \cite{cuchiero2023universal}. To this end, one needs to lift the path $t\mapsto X_t$ to the time-extended path $t\mapsto \hat{X}_t:=(t,X_t)$, and then construct the rough path above $\hat{X}$ only from the knowledge of the rough path  $\bX^{\leq p}$. In \cite{cuchiero2023universal} this can be done by invoking repeated integration by parts arguments. However, working with non-geometric rough paths deprives of us this possibility, and we will need some more details on the way to construct the general rough path $\hat{\bX}^{\leq p}$. 

\subsection{The regime $p<3$}
 When $p<3$, the rough path $\bX^{\leq 2} = (X^1,X^2)$, where $X^1_{s,t} = X_t-X_s$ for a path $X:[0,T]\rightarrow \RR^d$. In order to extend this to a rough path over $\hat{X}$ one can canonically define 
 \begin{equation}\label{eq:matrix}
     \hat{X}^2_{s,t} = \begin{pmatrix}
          \frac{(t-s)^2}{2} & \int_s^t r-s \dd X_r \\\int_s^t X_r-X_s \dd r & X^2_{s,t} 
     \end{pmatrix}
 \end{equation}
From this, the integral $\hat{X}^2$ takes values in $(\RR^{d+1})^{\otimes 2}$, and  the integral $\int_s^t r-s \dd X_r$ is well defined in the Young sense (since $t\mapsto t$ is smooth). Moreover, $\int_s^t X_r-X_s \dd r$ is defined in the Riemann-Stieltjes sense. Thus in the $p<3$ regime, the only new integrals that need to be defined in order to create $\hat{\bX}^{\leq 2}$ would be the three integrals in position $11,12$ and $21$ in the matrix \eqref{eq:matrix} which can simply be created by elementary means. 
\begin{rem}\label{rem:integration by parts for p<3}
    Note in particular that integration by parts holds for component $12$ and $21$ in the matrix \eqref{eq:matrix}  while it may not hold for any other component in $X^2$. Thus, $\hat{\bX}^{\leq 2}$ will not be geometric. 
\end{rem}

\subsection{The regime $p>3$}
Extending the picture to the $p>3$ regime requires more care. Now, $\bX^{\leq p}$ consists of $\lfloor p \rfloor>2$  levels of the rough path, and in order to construct the rough path over the time extension $\hat{X}$ one must make sure that all integrals are well defined. 

To this end, rough paths theory and the concept of controlled rough paths can be invoked for the construction of the various cross integrals appearing between $t$ and $X_t$. However, for this theory to be applied one needs some algebraic structure. In particular, as discovered in \cite{Gubinelli2010Ramification} and \cite{Hairer2015}, the concept of {\em branched rough paths} provides a suitably extended algebraic framework to still allow for constructions of the rough integration. 

For the purpose of this article where non-geometric rough paths are discussed broadly, we formulate an hypothesis to be called upon for some of the main results on functional approximation. 
But first, a definition.
\begin{defn}
    For any $X\in C^{p-var}([0,T],\RR^d)$ consider the time-augmented path $\hat{X}\in C^{p-var}([0,T],\RR^{d+1})$ defined by $\hat{X}_t=(t,X_t).$ We denote by
    \[\hat{V}^p(\triangle_T,T^{\lfloor p\rfloor}(\RR^{d+1}))\] the set of $p$-rough paths $\hat{\mathbf{X}}^{\leq p}$ over $\RR^{d+1}$ where the underlying path is time-augmented.
\end{defn}
\begin{hyp}\label{hyp:main}
    Let $\mathbf{X}^{\leq p}\in V^p(\triangle_T,T^{\lfloor p\rfloor}(\RR^d))$ be a $p$-rough path where $X\in C^{p-var}([0,T],\RR^d)$ is the underlying path. We assume:
    \begin{enumerate}
        \item There exists a $p$-rough path $\hat{\mathbf{X}}^{\leq p}\in \hat{V}^p(\triangle_T,T^{\lfloor p\rfloor}(\RR^{d+1}))$ where $\hat{X}$ is the underlying time augmentation of $X$.
        \item For any word $w$ of length less than or equal to $\lfloor p\rfloor$, then for all $n\in\NN$ there exists a linear functional $l_n\in T((\RR^{d+1}))^*$ with the property that
        \[\la l_n, \hat{\mathbf{X}}^{\leq\infty}_{0,T}\ra=\frac{1}{n!}\int_0^Ts^n\la w,\hat{\mathbf{X}}^{\leq p}_{0,s}\ra ds.\] Here $\hat{\mathbf{X}}^{\leq \infty}$ denotes the canonical extension of $\hat{\mathbf{X}}^{\leq p}$ to the full signature, guaranteed by Lyons extension theorem (see Thm. \ref{thm:lyons-extension}).
    \end{enumerate}
\end{hyp}
Throughout this article we assume the above hypothesis holds unless otherwise stated.

\begin{rem}
    Suppose the  hypothesis does not hold, i.e., we have a proper subset $\hat{\cS}^p\subsetneq \hat{V}^p(\triangle_T,T^{\lfloor p\rfloor}(\RR^{d+1}))$ which is the set of $p$-rough paths satisfying Hypothesis \ref{hyp}. For any $p\geq 1$, $\hat{\cS}^p$ is nonempty. Moreover, $\hat{\cS}^p$ is closed with respect to the $p$-variation metric: Indeed, one can show that for every weighted word $\pi\in T(\RR^{d+1})$, the linear functional $\la \pi,\cdot\rangle$ restricted to the Fock space is moreover bounded. In other words, the functional $\la \pi,\cdot\ra$ is an element of the topological dual of $\cF(\RR^{d+1})$. Thus, using continuity of the signature map, and the functionals $\la l_n,\cdot\ra,$ $\la w,\cdot\ra$, we get that every convergent sequence in $\hat{\cS}^p$, converges to a point in $\hat{\cS}^p$. Hence, if $\mathcal{K}\subset \hat{V}^p(\triangle_T,T^{\lfloor p\rfloor}(\RR^{d+1}) $ is compact, so is the intersection $\cK\cap \hat{\cS}^p$. Since compactness is an important ingredient in Stone-Weierstra{\ss} theorem, this remark ensures that our main results hold for this subset. Thus if the hypothesis is not true for all time-augmented paths, we consider this closed subset.
\end{rem}

\section{Functional approximation with signatures}\label{sec:universal}
With the goal of approximating complex pricing functionals from financial markets, we recall here some basic properties of universality of the signature. In addition, we provide a new statement of the universal approximation property for functionals acting on general rough paths (not restricted to geometric rough paths). In this section we will frequently use the set \begin{align*}&\hat{V}^p([0,T],T^{\lfloor p\rfloor}(\RR^{d+1}))\\&:=\left\{\hat{\mathbf{X}}^{\leq p}_{0,\cdot}:[0,T]\rightarrow T^{\lfloor p\rfloor}(\RR^{d+1})\mid \hat{\mathbf{X}}^{\leq p}\in \hat{V}^p(\triangle_T,T^{\lfloor p\rfloor}(\RR^{d+1}))\right\}\end{align*} of time-augmented $p$-rough paths where the first time component is evaluated at zero. Similarly, we define
\[\hat{V}^p([0,T],G^{\lfloor p\rfloor}(\RR^{d+1}))\] to be the set of geometric $p$-rough paths where the first time slot is evaluated at zero.

\subsection{Universal approximation for geometric rough paths}
The functional approximation setup outlined in \cite{Arribasetal20}, based on ideas also formulated in \cite{arribas2018derivativespricingusingsignature}, is strongly based on the universal approximation property of the signature. This property is a consequence of the Stone-Weierstrass theorem, using the fact that the linear span of signatures of geometric rough paths forms an algebra, and that when lifting the underlying path $X$ to its time extension 
$\hat{X}_t=(t,X_t)$, the signature $\hat{\bX}^{\leq\infty}$  uniquely determines the path, and thus separates points. The universal approximation theorem relied upon in \cite{arribas2018derivativespricingusingsignature,Arribasetal20} is based on Lipschitz-paths. The following universal approximation theorem is based on the recently proposed extension by Cuchiero et.al. in \cite{cuchiero2023universal} to the setting of $G^{\lfloor p \rfloor}(\RR^d)$-valued paths.

\begin{prop}[Universal approximation]\label{prop:universal approximation}
   For $p\geq 1$ let $\cK\subset \hat{V}^p([0,T];G^{\lfloor p \rfloor}(\RR^{d+1}))$ be a compact subset which is finite in the $p$-variation  norm. Suppose $F$ is a functional on $  \hat{V}^p([0,T];G^{\lfloor p \rfloor}(\RR^{d+1}))$. Let    $\hat{\bX}^{\leq \lfloor p\rfloor }\in \cK$ be a $p$-rough path  defined  according to Definition \ref{def:rough path} over the extended path $\hat{X}_t=(t,X_t)$ for some $X:[0,T]\rightarrow\RR^d$. Then for each $\epsilon>0$ there exists a linear functional $\pi\in T((\RR^{d+1}))^*$ such that  
   \begin{equation*}
      \|F-\la \pi,S(\cdot)_{0,T}\ra\|_{\infty,\cK}= \sup_{\hat{\bX}^{\leq p}\in \cK} |F(\hat{\bX}^{\leq \lfloor p\rfloor})-\la \pi,\hat{\bX}^{\leq \infty }_{0,T}\ra |<\epsilon, 
   \end{equation*}
   where we use the supremum norm on $C(\cK)$, $\|\cdot\|_{\infty,\cK}$, and $S(\cdot)_{s,t}:\cK\rightarrow T((\RR^{d+1}))$ is the signature map $\hat{\bX}^{\leq p}\mapsto  \hat{\bX}^{\leq \infty}_{s,t}$ evaluated at $(s,t)\in\triangle_T$. 
\end{prop}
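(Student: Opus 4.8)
The plan is to derive Proposition~\ref{prop:universal approximation} as an application of the Stone--Weierstrass theorem to the algebra generated by the linear signature functionals on the compact set $\cK$. First I would fix the setting: let $\cK\subset\hat V_c^p([0,T];G^{\lfloor p\rfloor}(\RR^d))$ be the compact subset from the statement, and consider the map $\Phi:\cK\to\cF(\RR^d)$ sending a path to its signature $\hat{\bX}^{\leq\infty}_{[0,T]}$; by Corollary~\ref{cor1} this is well defined, and one checks it is continuous in the $p$-variation topology (this is where compactness and the uniform bound on $\|\cdot\|_p$ over $\cK$ enter, via the factorial-decay estimate in Theorem~\ref{thm:lyons-extension}). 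The candidate approximating family is
\[
\cV=\left\{\,\cK\ni\hat{\bX}^{\leq\infty}_{[0,T]}\mapsto\la\pi,\hat{\bX}^{\leq\infty}_{[0,T]}\ra\ \middle|\ \pi\in T(\RR^d)\,\right\}\subset C(\cK;\RR).
\]

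Next I would verify the three hypotheses of Stone--Weierstrass for $\cV$ (or rather its closure). The family $\cV$ is a linear subspace by construction. It contains the constants, since the empty word $\emptyset$ pairs with every signature to give $1$. It is closed under multiplication precisely because of the shuffle property, Lemma~\ref{lemma:shuffle}: for geometric rough paths $\la\pi,\hat{\bX}^{\leq\infty}_{[0,T]}\ra\la\pi',\hat{\bX}^{\leq\infty}_{[0,T]}\ra=\la\pi\shuffle\pi',\hat{\bX}^{\leq\infty}_{[0,T]}\ra$, and $\pi\shuffle\pi'\in T(\RR^d)$, so $\cV$ is a subalgebra of $C(\cK;\RR)$. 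Finally, $\cV$ separates points of $\cK$: if two time-extended paths have the same signature, then by the uniqueness-up-to-tree-like-equivalence result (property~(ii) in the introduction, \cite{hamblylyons2010}) together with the fact that the time coordinate $\hat X^0_t=t$ is strictly increasing — which kills any tree-like part — the underlying paths coincide. Hence distinct elements of $\cK$ are separated already by the level-one functionals $\pi=e_i$.

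With these three properties in hand, the Stone--Weierstrass theorem gives that $\cV$ is dense in $C(\cK;\RR)$ in the uniform norm. Since $F$ restricted to $\cK$ is a real-valued continuous functional, for every $\epsilon>0$ there is $\pi\in T(\RR^d)$ with $\max_{\hat{\bX}^{\leq p}\in\cK}|F(\hat{\bX}^{\leq p})_{[0,T]}-\la\pi,\hat{\bX}^{\leq\infty}_{[0,T]}\ra|<\epsilon$, which is the claim (identifying $T(\RR^d)\simeq T((\RR^d))^*\subset T(\RR^d)^*$ as in Remark~\ref{rem1}).

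I expect the main obstacle to be the point-separation step: one must be careful that it is the \emph{time-extended} signature that is injective on $\cK$, since the plain signature only determines a path up to tree-like equivalence and up to translation (properties (ii) and (iv)). The role of the added monotone time component $\hat X_t=(t,X_t)$ is exactly to rule out tree-like cancellations and reparametrisation ambiguity, so this has to be invoked explicitly; a secondary technical point is checking continuity of $\Phi$ on $\cK$ uniformly, for which the quantitative bound $|X^k_{s,t}|_k\le\|\mathbf X\|_{p,[s,t]}^k/(\beta(p)(k/p)!)$ from Theorem~\ref{thm:lyons-extension} together with the assumed uniform $p$-variation bound on $\cK$ does the job. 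Everything else is a routine verification of the Stone--Weierstrass hypotheses.
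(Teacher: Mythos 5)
Your proof is correct and follows exactly the strategy the paper itself sketches for this result (Stone--Weierstrass applied to the shuffle algebra of signature functionals, with the time-extension providing point separation); note that the paper does not actually prove Proposition \ref{prop:universal approximation} but imports it from \cite{cuchiero2023universal}. The only point worth tightening is your separation step: for $p\geq 2$ the uniqueness of the signature up to tree-like equivalence needs the rough-path extension of \cite{hamblylyons2010} rather than that result itself, and the translation-invariance worry you raise is harmless here only because elements of $\hat{V}_c^p([0,T];G^{\lfloor p\rfloor}(\RR^d))$ are group-valued increment paths started at the identity, so no constant-shift ambiguity survives.
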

The universal approximation applied to $G^{\lfloor p\rfloor}(\RR^{d+1})$ valued paths allows us, in particular, to consider functionals of truly rough signals $\hat{X}$, that only have finite $p\geq 1$ variation, as long as we have constructed the rough path of $\hat{\bX}^{\leq p}$ over $\hat{X}$. For instance, we can consider functionals of paths of the Brownian motion. However, in its current form, it is only formulated for Stratonovich lifts of the iterated integral, excluding the more natural choice of rough paths lift for financial applications, namely the It\^o lift. 
One might try to work around this problem by identifying the It\^o-Stratonovich correction term and implementing this in the functional and signature. However, under the working hypothesis that will be used in the remainder of the text, we can circumvent this challenge completely.

\subsection{General universal approximation on rough paths}

The universal approximation theorem in Proposition \ref{prop:universal approximation} heavily relies upon the geometric structure of the rough paths, on which the functionals $F$ act on. The reason is that when applying the Stone-Weierstrass theorem to check for denseness of the linear span of signature terms in the space $\hat{V}^p([0,T];G^{\lfloor p\rfloor}(\RR^{d+1}))$, one relies upon being able to multiply two signature terms $\la \pi_1,\hat{\bX}^{\leq \infty}\ra $ and $ \la \pi_2,\hat{\bX}^{\leq \infty}\ra$  to obtain a new signature term $\la \pi ,\hat{\bX}^{\leq \infty}\ra$. When $
\hat{\bX}^{\leq   p} $ is geometric (i.e. takes values in $G^{\lfloor p \rfloor}(\RR^{d+1})$) it follows from Lemma \ref{lemma:shuffle} that this holds with $\pi = \pi_1\shuffle \pi_2 $.  However, this is a restrictive class of signatures; in the semi-martingale setting it corresponds to Stratonovich lifts of the rough path. In financial applications, one typically work with functions that acts on non-geometric rough paths, such that It\^o lifts of semi-martingales. Thus extending the universal approximation property to any rough path provides practical consequences for several applications. 

It turns out that a simple mixing of ideas from the Stone-Weierstrass theorem over polynomials, with the classical signature universal approximation allows one to obtain such a new generalized universal approximation. 
To this end we will need a 
so-called separation of points property of the sub-algebra of continuous functionals that we will consider as a the basis for functional approximation. We therefore recall the following technical lemma from \cite[Cor. 4.24]{Brezis2011}. 
\begin{lem}\label{lem:technical}
Let $\Omega\subset \RR^d$ be open, and suppose $u\in L^1_{loc}(\Omega)$ is such that
\[
\int_\Omega u(z) f(z)\dd z = 0,\quad \forall f\in C^\infty_c(\Omega). 
\]
Then $u\equiv 0$ a.e on $\Omega$.
\end{lem}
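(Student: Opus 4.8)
The plan is to prove this by mollification, reducing the global statement to a purely local one. Since the open set $\Omega$ can be exhausted by a countable increasing sequence of compact sets $K_1\subset K_2\subset\cdots$ with $\bigcup_j K_j=\Omega$, and a countable union of Lebesgue-null sets is null, it suffices to show that $u\equiv 0$ a.e.\ on each fixed compact $K\subset\Omega$.

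Fix such a $K$ and set $\delta=\mathrm{dist}(K,\RR^d\setminus\Omega)>0$ (with $\delta=+\infty$ if $\Omega=\RR^d$). Choose a standard mollifier $\rho\in C^\infty_c(B_1(0))$ with $\rho\geq 0$ and $\int_{\RR^d}\rho=1$, and put $\rho_\varepsilon(x)=\varepsilon^{-d}\rho(x/\varepsilon)$. The key observation is that for any $x\in K$ and any $0<\varepsilon<\delta$, the function $z\mapsto\rho_\varepsilon(x-z)$ is smooth and supported in $B_\varepsilon(x)\subset\Omega$, hence lies in $C^\infty_c(\Omega)$. Applying the hypothesis with $f(z)=\rho_\varepsilon(x-z)$ then yields
\[
(u*\rho_\varepsilon)(x)=\int_\Omega u(z)\,\rho_\varepsilon(x-z)\,\dd z=0\qquad\text{for all }x\in K,\ 0<\varepsilon<\delta.
\]

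It remains to invoke the standard approximation property of mollifiers: since $u\in L^1_{loc}(\Omega)$, one has $u*\rho_\varepsilon\to u$ in $L^1(K)$ as $\varepsilon\to 0$ (classical; approximate $u$ restricted to a slightly larger compact $K'\subset\Omega$ by continuous functions in the $L^1$ norm and use uniform continuity together with $\|\rho_\varepsilon\|_{L^1}=1$). Because the left-hand side vanishes identically on $K$ for all small $\varepsilon$, passing to the limit forces $\|u\|_{L^1(K)}=0$, i.e.\ $u=0$ a.e.\ on $K$. Letting $K$ run through the exhausting family $K_j$ finishes the proof.

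The only point requiring any care — and it is routine rather than a genuine obstacle — is the localization near $\partial\Omega$: one must keep $\varepsilon$ below $\mathrm{dist}(K,\RR^d\setminus\Omega)$ so the translated mollifier stays compactly supported inside $\Omega$, and run the mollifier-convergence estimate on a compact neighbourhood of $K$ still contained in $\Omega$. The statement is essentially the fundamental lemma of the calculus of variations in its $L^1_{loc}$ form, and the mollification argument above is its standard proof. (Alternatively one could argue directly from the Lebesgue differentiation theorem by testing against smooth bump functions approximating indicators of small balls centred at Lebesgue points of $u$, but the mollifier route is cleaner.)
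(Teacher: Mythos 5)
Your proof is correct: it is the standard mollification proof of the fundamental lemma of the calculus of variations, with the localization near $\partial\Omega$ handled properly. The paper itself gives no proof, simply citing this as Corollary 4.24 of Brezis, whose argument is the same mollifier-based one you present.
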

With this lemma at hand we are now ready to prove a generalized version of polynomial universal approximation over rough paths with values in the tensor algebra $T((\RR^{{d+1}}))$. 

\begin{thm}[Generalized universal approximation]\label{thm:universal general}
    Let $F$ be a continuous functional on a compact set  $\mathcal K\subset\hat{V}^p([0,T];T^{\lfloor p\rfloor}(\RR^{d+1}))$. Then for any $\epsilon>0$ there exists finite set $\cN\subset \NN^n$, and a polynomial $\bar{f}_{\cN}:\RR^n\rightarrow \RR$ given by 
    \begin{equation}\label{eq:multivariate polynomial}
        \bar{f}_{\cN}(x)= \sum_{m\in \cN} \alpha_m x^m, 
    \end{equation}
    and a sequence of linear operators $\{\pi_i\}_{i=1}^n \subset T(\RR^{d+1})$ with the property that for all $\hat{\bX}_{0,\cdot}^{\leq p}\in \mathcal K$ then 
    \begin{equation*}
        \left|F(\hat{\bX}_{0,\cdot}^{\leq p})_T-\bar{f}_{\cN}\left(\la \pi_1,\hat{\bX}^{\leq \infty}_{0,T}\ra,\ldots,\la \pi_n,\hat{\bX}^{\leq \infty}_{0,T}\ra\right)\right|<\epsilon. 
    \end{equation*}
\end{thm}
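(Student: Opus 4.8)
The plan is to run the Stone--Weierstrass theorem on the algebra generated by the signature-coordinate functionals, exactly as in the geometric case, but \emph{without} the shuffle identity available to keep us inside the linear span of signature terms. Concretely, fix the compact set $\mathcal K\subset\hat V^p([0,T];T^{\lfloor p\rfloor}(\RR^d))$ and let $\mathcal A$ be the $\RR$-algebra of functionals on $\mathcal K$ generated (under pointwise addition and multiplication) by the maps $\hat{\bX}^{\leq p}\mapsto\la\pi,\hat{\bX}^{\leq\infty}\ra$ for $\pi\in T(\RR^d)$, together with constants (the constant $1$ is $\la\emptyset,\hat{\bX}^{\leq\infty}\ra$). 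By construction $\mathcal A$ is a point-separating, unital subalgebra of $C(\mathcal K)$: separation of points uses that $\hat X_t=(t,X_t)$ has a genuine time component, so by the uniqueness-up-to-tree-like-equivalence result (property (ii) in the introduction) together with the reparametrization bookkeeping the time coordinate removes, two distinct elements of $\mathcal K$ have distinct signatures, hence are distinguished by some linear functional $\la\pi,\cdot\ra$. First I would record these two facts (unital subalgebra, separates points), then invoke Stone--Weierstrass: $\mathcal A$ is dense in $C(\mathcal K)$ in the uniform norm, so there is $G\in\mathcal A$ with $\sup_{\mathcal K}|F-G|<\epsilon$.

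The second step is purely bookkeeping: any $G\in\mathcal A$ is, by definition of the generated algebra, a polynomial (with real coefficients, no constant-term obstruction since $1\in\mathcal A$) in finitely many of the generators. That is, there are finitely many $\pi_1,\dots,\pi_n\in T(\RR^d)$, a finite index set $\cN\subset\NN^n$, and coefficients $\alpha_m\in\RR$ so that
\[
G(\hat{\bX}^{\leq p})=\sum_{m\in\cN}\alpha_m\prod_{i=1}^n\la\pi_i,\hat{\bX}^{\leq\infty}\ra^{m_i}=\bar f_{\cN}\bigl(\la\pi_1,\hat{\bX}^{\leq\infty}\ra,\dots,\la\pi_n,\hat{\bX}^{\leq\infty}\ra\bigr),
\]
with $\bar f_{\cN}$ exactly the multivariate polynomial in \eqref{eq:multivariate polynomial}. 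Combined with the approximation bound this is precisely the claimed statement, so modulo the two facts about $\mathcal A$ nothing further is needed.

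The main obstacle is the separation-of-points claim, and this is where Lemma \ref{lem:technical} is presumably meant to enter. One has to be careful: the map $\hat{\bX}^{\leq p}\mapsto\hat{\bX}^{\leq\infty}$ need not be injective on all of $\hat V^p$ in the non-geometric setting (tree-like equivalence is stated for the true signature of a path, and the non-geometric lift carries extra data in the second level), so I would argue injectivity of $\bX\mapsto\bX^{\leq\infty}$ directly from Chen's identity and Lyons' extension theorem: the level-one and level-$\lfloor p\rfloor$ data already determine the rough path, and they are coordinates of $\hat{\bX}^{\leq\infty}$, so two rough paths with equal signatures coincide. Granting that, if $\hat{\bX}^{\leq p}\neq\hat{\bY}^{\leq p}$ in $\mathcal K$ then $\hat{\bX}^{\leq\infty}\neq\hat{\bY}^{\leq\infty}$ in $\cF(\RR^d)$, and since $T(\RR^d)$ is dense in $\cF(\RR^d)$ (and, via Remark \ref{rem1}, sits inside $T((\RR^d))^*$) there is $\pi\in T(\RR^d)$ with $\la\pi,\hat{\bX}^{\leq\infty}\ra\neq\la\pi,\hat{\bY}^{\leq\infty}\ra$; the role of Lemma \ref{lem:technical} is to upgrade ``the functionals $\la\pi,\cdot\ra$ separate the two signature vectors'' from the Fock-space inner-product pairing to the algebraic-dual pairing, ruling out that a nonzero difference of signatures is annihilated by every polynomial functional. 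A secondary point to check is measurability/continuity of the generators on $\mathcal K$ so that $\mathcal A\subset C(\mathcal K)$ — this follows from continuity of $(s,t)\mapsto\bX^{\leq\infty}_{s,t}$ in the $p$-variation topology and the Lyons-extension estimate in Theorem \ref{thm:lyons-extension}, but it is worth stating explicitly.
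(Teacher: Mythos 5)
Your overall strategy --- apply Stone--Weierstrass to the unital algebra generated by the coordinate functionals $\hat{\bX}^{\leq p}\mapsto\la\pi,\hat{\bX}^{\leq \infty}\ra$, then unwind a generic element of that algebra as a multivariate polynomial in finitely many generators --- is exactly the paper's, and the subalgebra, constant-function and bookkeeping steps are fine. The gap is in the separation-of-points step, which is the only non-routine part of the argument. You assert that $\hat{\bX}^{\leq p}\neq\hat{\bY}^{\leq p}$ forces $\hat{\bX}^{\leq \infty}\neq\hat{\bY}^{\leq \infty}$ and hence that some $\pi\in T(\RR^d)$ separates them, justified by ``the level-one and level-$\lfloor p\rfloor$ data are coordinates of $\hat{\bX}^{\leq \infty}$.'' This conflates the signature as a two-parameter multiplicative functional on $\triangle_T$ (where injectivity of the Lyons extension is indeed immediate) with its value at the single point $(0,T)$, which is all the functionals $\la\pi,\hat{\bX}^{\leq \infty}\ra$ in your algebra ever see. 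Two distinct elements of $\cK$ can agree at level one at the endpoint (same total increment, different paths), and by Chen's identity agreement of $\hat{\bX}^{\leq \infty}_{0,T}$ does not force agreement of $\hat{\bX}^{\leq \infty}_{0,t}$ for $t<T$; ruling this out is precisely the content of the uniqueness-of-signature/tree-like results that you correctly note are not available off the geometric setting. Your proposed use of Lemma \ref{lem:technical} (``upgrading the Fock pairing to the algebraic-dual pairing'') does not repair this: if two elements of $T((\RR^d))$ differ, a single word already separates them and no lemma is needed; the unproven point is that the \emph{endpoint} signatures differ at all.

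The paper closes exactly this hole by exhibiting explicit generators that see the whole path rather than only its endpoint: for the time-augmented path, the words $w=\mathbf{1}\cdots\mathbf{1}\,\mathbf{j}$ give $\la w,\hat{\bX}^{\leq \infty}_{0,T}\ra$ equal, up to normalisation, to the moments $\int_0^T s^{n}\,(X^j_s-X^j_0)\,\dd s$. If all of these agree for $X$ and $Y$, then $\int_0^T q(s)\bigl((X^j_s-X^j_0)-(Y^j_s-Y^j_0)\bigr)\,\dd s=0$ for every polynomial $q$, hence for every $f\in C^\infty_c((0,T))$ by density of polynomials, and Lemma \ref{lem:technical} --- used as the fundamental lemma of the calculus of variations, not as a duality statement --- yields $X-X_0=Y-Y_0$. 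That construction (or some substitute, such as a uniqueness-of-signature theorem valid for non-geometric lifts) is the step your proof is missing; without it the separation claim, and hence the application of Stone--Weierstrass, is not established.
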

\begin{proof}
   Let $\cE^p:= \hat{V}^p([0,T];T^{\lfloor p\rfloor} (\RR^{d+1}))$. 
Define for $n\in \NN$ and $p\geq 1$,
\begin{equation}
    \cA_{n,p} = \mathrm{span}\{ \hat{\bX}_{0,\cdot}^{\leq p } \mapsto  \prod_{i=1}^n\la \pi_i,\hat{\bX}_{0,T}^{\leq \infty}\ra^{m_i} \, |\,\,  m\in \NN^n,\, \{\pi_i\}_{i=1}^n\subset T((\RR^{d+1})^*),\, \hat{\bX}_{0,\cdot}^{\leq p} \in \cE^p  \}.
\end{equation}
Furthermore, let $\cA_p= \bigcup_{n=1}^\infty \cA_{n,p}$. Clearly, $\cA_p\subset C(\cE^p;\RR)$. We denote by $\cA_p|_\cK=\{P|_\cK\in C(\cK)\mid P\in \cA_p\}$ the set $\cA_p$ where we restrict the domain to $\cK$.  With the goal of applying Stone-Weierstrass theorem to prove denseness of  $\cA_p|_\cK$ in $C(\cK)$ we check that the following holds: 
\begin{itemize}[leftmargin=*]
    \item[i)] The set $\cA_p$ forms a sub-algebra.
    Indeed; addition holds. Furthermore, for two elements $A\in\cA_{n,p},B\in\cA_{k,p}$ given by $A(\hat{\bX}^{\leq p}_{0,\cdot})=\prod_{i=1}^n\la \pi_i,\hat{\bX}_{0,T}^{\leq \infty}\ra^{m_i}\in \RR$ and $B(\hat{\bX}^{\leq p}_{0,\cdot})=\prod_{i=1}^k\la \pi'_i,\hat{\bX}^{\leq \infty}\ra^{m_i'}\in \RR$ then we can choose $\{\tilde{\pi_i}\}$ such that 
    \begin{align*}
    (AB)(\hat{\bX}^{\leq p}_{0,\cdot})&=A(\hat{\bX}^{\leq p}_{0,\cdot})B(\hat{\bX}^{\leq p}_{0,\cdot})= \prod_{i=1}^n\prod_{j=1}^k\la \pi_i,\hat{\bX}_{0,T}^{\leq \infty}\ra^{m_i}\la \pi'_i,\hat{\bX}_{0,T}^{\leq \infty}\ra^{m'_i}\\&= \prod_{i=1}^{n+k}\la \tilde{\pi}_i,\hat{\bX}_{0,T}^{\leq \infty}\ra^{\tilde{m}_i}=:C(\hat{\bX}^{\leq p}_{0,\cdot}).
    \end{align*} 
    where  $\tilde{\pi}_i = \pi_i$ for $i\in \{1,...,n\}$ and $\tilde{\pi}_i=\pi'_{i-n}$ for $i=n+1,...,n+k$, and $\tilde{m_i}$ is the concatenation of $m_i$ and $m_i'$. Thus $AB=C\in \cA_{k+n,p}$.
    \item[ii)] $\cA_p$ separates points. 
    Let $\hat{\bX}_{0,\cdot}^{\leq p},\hat{\bY}_{0,\cdot}^{\leq p}\in \cE^p$.
    We need to show that for $\hat{\bX}_{0,\cdot}^{\leq p}\neq\hat{\bY}_{0,\cdot}^{\leq p}$, there exists a $P\in \cA_p$ such that $P(\hat{\bX}_{0,\cdot}^{\leq p})\neq P(\hat{\bY}_{0,\cdot}^{\leq p})$. Since  \[(1,\hat{X}_{0,\cdot}^1,\dots,\hat{X}_{0,\cdot}^{\lfloor p\rfloor})=\hat{\mathbf{X}}_{0,\cdot}^{\leq p}\neq\hat{\mathbf{Y}}_{0,\cdot}^{\leq p}=(1,\hat{Y}_{0,\cdot}^1,\dots,\hat{Y}_{0,\cdot}^{\lfloor p\rfloor}),\] there exists a $j\in \{1,\dots, \lfloor p\rfloor\}$, a word $w\in\mathcal{W}^j$ of length $j$, and an $s\in[0,T]$, such that \[\la w, \hat{\mathbf{X}}^{\leq \infty}_{0,s}\ra=\la w,\hat{\mathbf{X}}^{\leq p}_{0,s}\ra\neq \la w,\hat{\mathbf{Y}}^{\leq p}_{0,s}\ra=\la w, \hat{\mathbf{Y}}^{\leq\infty}_{0,s}\ra.\]

Now, assume for contradiction that $\mathcal{A}_p$ does not separate points, i.e., $P(\hat{\mathbf{X}}^{\leq p}_{0,\cdot})=P(\hat{\mathbf{Y}}^{\leq p}_{0,\cdot})$ for all $P\in\mathcal{A}_p$. Then by Hypothesis \ref{hyp:main}, for any $n\in\NN$ let the word $l_n$ be such that
\[\la l_n, \hat{\mathbf{X}}^{\leq\infty}_{0,T}\ra=c\int_0^Ts^n\la w, \hat{\mathbf{X}}^{\leq p}_{0,s}\ra ds,\] and define $P_{l_n}\in \mathcal{A}_p$ by
\[P_{l_n}(\cdot)=\la l_n, S(\cdot)_{0,T}\ra.\] Since $P_{l_n}(\hat{\mathbf{X}}^{\leq p}_{0,\cdot})=P_{l_n}(\hat{\mathbf{Y}}^{\leq p}_{0,\cdot})$ we further obtain the equation:

\[c\int_0^Ts^n(\la w,\hat{\mathbf{X}}^{\leq p}_{0,s}\ra-\la w, \hat{\mathbf{Y}}^{\leq p}_{0,s}\ra)ds=0,\quad \text{ for all }n\in\NN.\] Then since polynomials are dense in $C([0,T];\RR)$ it follows by Lemma \ref{lem:technical} that\[\la w,\hat{\mathbf{X}}^{\leq p}_{0,s}\ra=\la w, \hat{\mathbf{Y}}^{\leq p}_{0,s}\ra,\quad \text{ for all } s\in[0,T],\] which yields a contradiction.    
       As a consequence it implies that $\cA_p$ separate points.  
    \item[iii)] The constant function $1$ is in $\cA_p$, since by choosing $\pi=\emptyset\in T((\RR^{d+1}))^*$ we get that $\la \emptyset, \hat{\bX}^{\leq \infty}\ra=1 $ and therefore in particular $\la \emptyset,\hat{\bX}^{\leq p}_{0,\cdot}\ra =1 $.
\end{itemize} 
From these three properties, it follows by the Stone-Weierstrass theorem that for any compact  $\cK\subset\cE_p$, then $\cA_p|_\cK$, the restriction of $\cA_p$ on $\cK$, is dense in $C(\cK;\RR)$, which concludes the proof.  
\end{proof}

\subsection{Signature associated to price paths}

As discussed in the introduction, even the most complex financial derivatives typically have a simple functional structure. By this we mean that the path dependent nature of the functional either comes through an averaging over the price path (like in Asian style derivatives), or products of price paths (quanto-style options), in addition to basket of different assets etc. with these structures. 
It is therefore natural to assume that for this purpose, given a price path $X$, and the extended signature $\hat{\bX}^{\leq \infty}$, the payoff functional $F(\hat{\bX}^{\leq p}_{0,\cdot})$ can be written as a function $f:\RR^n\rightarrow \RR$, and $n$ different weighted words $\pi_i$, such that 
\[
 F(\hat{\bX}^{\leq p}_{0,\cdot})=f(\la \pi_1,\hat{\bX}_{0,T}^{\leq \infty})\ra,\ldots,\la \pi_n,\hat{\bX}_{0,T}^{\leq \infty}\ra  ). 
\]
This will therefore be the main working hypothesis of the subsequent sections, and we will illustrate several numerical and analytic advantages of using this specific structure. We will also give examples to show exactly how this hypothesis applies to various exotic derivatives. As a first simple example, we have the following:
\begin{ex}
\label{ex:asian-quanto}
Asian options are contracts that pay the holder an amount of money according to the average price over a period of time. If $X$ is the price process, the holder receives $f(\int_0^TX_sds)$ at time $T$. We let the signature of $X$ be the canonical choice determined by the model. In classical models of Asian options, the price process $X$ is a semi-martingale, hence the canonical choice in this case would be the Itô signature. By assuming Hypothesis \ref{hyp}, consider the time-enhanced price path $\hat{X}$ defined by $\hat{X}_t=(t,X_t)$, we see (recall Example \ref{ex:word-functional-identification} with $d=2$ and $\mathcal{A}=\{\mathbf{1},\mathbf{2}\}$) that $\int_0^TX_tdt=\int_0^T\int_0^tdX_rdt+X_0 T=\langle w,\hat{\mathbf{X}}^{\leq\infty}_{0,T}\rangle$ for 
$w=\mathbf{2}\mathbf{1}+X_0\mathbf{1}$. Another example is a spread option between two assets with price dynamics $X_1$ and $X_2$, respectively, paying the holder $\max(X_{1,T}-c X_{2,T},0)$ at time $T$, for a conversion constant $c$ (here, $c$ may convert the currency of the second asset into the currency of the first, say). With $\hat{X}=(t,X_1,X_2)$, we can (still following the notation in Example \ref{ex:word-functional-identification}, now with $d=3$) express the payoff as 
$$
f(\langle w_1,\hat{\mathbf{X}}_{0,T}^{\leq\infty}\rangle,\langle w_2,\hat{\mathbf{X}}_{0,T}^{\leq\infty}\rangle)=\max(\langle w_1,\hat{\mathbf{X}}_{0,T}^{\leq\infty}\rangle-c\langle w_2,\hat{\mathbf{X}}_{0,T}^{\leq\infty}\rangle,0)
$$
with $w_1=\mathbf{2}+X_{1,0}\emptyset$ and $w_2=\mathbf{3}+X_{2,0}\emptyset$,
or, more simple,
$$
f(\langle\pi,\hat{\mathbf{X}}_{0,T}^{\leq\infty}\rangle)=\max(\langle \pi,\hat{\mathbf{X}}_{0,T}^{\leq\infty}\rangle,0)
$$
with the weighted word $\pi=w_1-c w_2$. Yet another example from energy finance is so-called quanto-options (see e.g. \cite{BLM}), where the holder receives a payment at exercise time $T$ according to a product of two payoffs on the average of the spot energy price and temperature, say. Denoting $X_1$ the energy spot price, $X_2$ the temperature process, and $\hat{X}=(t,X_{1},X_{2})$, we have a payoff
$$
f(\langle w_1,\hat{\mathbf{X}}_{0,T}^{\leq\infty}\rangle,\langle w_2,\hat{\mathbf{X}}_{0,T}^{\leq\infty}\rangle)=g(\langle w_1,\hat{\mathbf{X}}_{0,T}^{\leq\infty}\rangle)h(\langle w_2,\hat{\mathbf{X}}_{0,T}^{\leq\infty}\rangle)
$$
where $w_1=\mathbf 2\mathbf 1+X_{1,0}\mathbf{1}$ and $w_2=\mathbf 3\mathbf 1+X_{2,0}\emptyset$, and $g,h$ are the payoff functions written on the average of the spot energy price and temperature, resp. 
\end{ex}

Let us precise the hypothesis we work under in the remainder of this paper. 

\begin{hyp}\label{hyp}
 Let $p\geq 1$ and  assume that for a given continuous functional $F$ acting on $\hat{V}^p([0,T];T^{\lfloor p\rfloor}(\RR^{d+1}))$, there exists a collection of linear operators  $\{\pi_i\}_{i=1}^n \in T(\RR^{d+1}) $, and a function $f:\RR^n \rightarrow \RR$ such that for any $\hat{\bX}^{\leq p}_{0,\cdot}\in \hat{V}^p([0,T];T^{\lfloor p\rfloor}(\RR^{d+1}))$
    \begin{equation}\label{eq:hyp representation}
    F(\hat{\bX}^{\leq p}_{0,\cdot})=f(\la \pi_1,\hat{\bX}_{0,T}^{\leq \infty})\ra,\ldots,\la \pi_n,\hat{\bX}_{0,T}^{\leq \infty}\ra  ). 
\end{equation}
\end{hyp}

The main advantage of invoking Hypothesis \ref{hyp} is that polynomials are dense in the space of continuous functions on a compact. Thus, if Hypothesis \ref{hyp} holds for a continuous function $f$, then we may approximate the functional $\hat{\bX}^{\leq p} \mapsto F(\hat{\bX}^{\leq p})$  by a (multivariate) polynomial in a finite number of signature coefficients. More precisely, if Hypothesis \ref{hyp} holds,  then for all $\epsilon>0$, there exists a polynomial $\bar{f}$ such that $\|f-\bar{f}\|<\epsilon$ and $\bar{f}$ is of the form
\[
\bar{f}(x)= \sum_{m\in \NN^{n},|m|\leq N} \alpha_m x^m
\]
where for a multi-index $m\in \NN^n$ and $x\in \RR^n$, we write $x^m=x_1^{m_1}\dots x_n^{m_n}$, and $|m|=m_1+\dots+m_n$. The coefficients $\alpha_m$ are real numbers labeled by the multi-index $m\in \NN^n$. 
This leads to a simplified version of the universal approximation theorem, being a consequence of the Stone-Weierstrass theorem and Hypothesis \ref{hyp}.

\begin{thm}\label{thm:universal under hyp}
    Let $F$ be a continuous functional on $\hat{V}^p([0,T];T^{\lfloor p\rfloor}(\RR^{d+1}))$, and suppose Hypothesis \ref{hyp} holds with a sequence $\{\pi_i\}_{i=1}^n$ and a \emph{continuous} function $f$. Then for any $\epsilon>0$ there exists a finite set $\cN\subset \NN^n$, a compact set $\cK_\epsilon\subset \RR^n$, and a polynomial $\bar{f}_{\cN}:\RR^n\rightarrow \RR$ given by 
    \begin{equation*}
        \bar{f}_{\cN}(x)= \sum_{m\in \cN} \alpha_m x^m, 
    \end{equation*}
    with the property that 
    \begin{equation}
    \label{property-uat}
        \left|F(\hat{\bX}_{0,\cdot}^{\leq p})-\bar{f}_{\cN}\left(\la \pi_1,\hat{\bX}_{0,T}^{\leq \infty}\ra,\ldots,\la \pi_n,\hat{\bX}_{0,T}^{\leq \infty}\ra\right)\right|<\epsilon, 
    \end{equation}
    for all $\hat{\bX}^{\leq p}_{0,\cdot}\in \hat{V}^p([0,T];T^{\lfloor p\rfloor}(\RR^{d+1})) $ such that $\left(\la \pi_1,\hat{\bX}_{0,T}^{\leq \infty}\ra,\ldots,\la \pi_n,\hat{\bX}_{0,T}^{\leq \infty}\ra\right) \in \cK_\epsilon $. 
    Furthermore, let $K>0$ be the constant from Theorem \ref{thm:lyons-extension} such that for $\pi_i\in T((\RR^{d+1})^*)$ of the form $\pi_i = \sum_{j=1}^N \kappa_{ij} e_{w_{ij}}$ with $\{\kappa_{ij}\}_{j=1}^N\subset \RR$, 
   \[
   |\la \pi_i,\hat{\bX}_{0,T}^{\leq \infty} \ra |  \leq  \sum_{j=1}^N |\kappa_{ij}|\frac{K^{|w_{ij}|}}{(|w_{ij}|/p)!}, 
   \] 
   where $|w_i|=|e_{w_i}|$. 
  Suppose the coefficients $\{\alpha_m\}_{m\in \cN}$ satisfies for some $C>0$ 
    \begin{equation}\label{eq:a conditions}
    |\alpha_m|\leq \frac{C^{|m|}}{m!}  \prod_{i=1}^n \left(\sum_{j=1}^{N_i} |\kappa_{ij}|\frac{(|w_{ij}|/p)!}{K^{|w_{ij}|}}\right)^{m_i}, 
     \end{equation}
    where $m!=m_1!m_2!\ldots m_n!$. 
    Then we have that 
    \begin{equation}\label{eq:approximation bound general}
          \left|F(\hat{\bX}^{\leq p}_{0,\cdot})-\bar{f}_{\cN}\left(\la \pi_1,\hat{\bX}_{0,T}^{\leq \infty}\ra,\ldots,\la \pi_n,\hat{\bX}_{0,T}^{\leq \infty}\ra\right)\right| \leq \frac{\exp(C)C^{|\cN|+1}}{(n-1)!(|\cN|-n-1)!}.
    \end{equation}
\end{thm}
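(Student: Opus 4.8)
The plan is to prove the two assertions of the theorem separately, as they have somewhat different characters. The first part --- the existence of a polynomial approximant satisfying \eqref{property-uat} on a suitable hypercube --- follows by combining Hypothesis \ref{hyp} with classical approximation theory. First I would invoke Hypothesis \ref{hyp} to write $F(\hat{\bX}^{\leq p})=f(g(\hat{\bX}^{\leq p}))$, where $g(\hat{\bX}^{\leq p})=(\la \pi_1,\hat{\bX}^{\leq \infty}\ra,\ldots,\la \pi_n,\hat{\bX}^{\leq \infty}\ra)$. Since $f$ is continuous on $\RR^n$, its restriction to any compact hypercube $\cK_\epsilon=[-R,R]^n$ is uniformly continuous, and by the multivariate Stone--Weierstrass (or Bernstein) theorem there is a polynomial $\bar f_\cN$ with $\sup_{x\in\cK_\epsilon}|f(x)-\bar f_\cN(x)|<\epsilon$. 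Reading this along the orbit $\{g(\hat{\bX}^{\leq p})\in\cK_\epsilon\}$ gives \eqref{property-uat}. The choice of $\cK_\epsilon$ is unconstrained in the first part; it becomes relevant only for the quantitative bound.

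For the quantitative bound \eqref{eq:approximation bound general} the strategy is to estimate $|\bar f_\cN(g(\hat{\bX}^{\leq p}))|$ directly from the coefficient hypothesis \eqref{eq:a conditions} together with the Lyons-extension bound on each $|\la\pi_i,\hat{\bX}^{\leq\infty}\ra|$ recalled in the statement. Write $y_i=\la\pi_i,\hat{\bX}^{\leq\infty}\ra$, so that $|y_i|\le B_i:=\sum_{j=1}^{N_i}|\kappa_{ij}|K^{|w_{ij}|}/(|w_{ij}|/p)!$. Then $|\bar f_\cN(y)-\text{(the part being compared to }F)|$ is controlled by $\sum_{m\in\cN}|\alpha_m|\prod_i B_i^{m_i}$, and the hypothesis \eqref{eq:a conditions} is engineered precisely so that $|\alpha_m|\prod_i B_i^{m_i}\le C^{|m|}/m!$ (the product $\prod_i(\cdots)^{m_i}$ in \eqref{eq:a conditions} cancels the product $\prod_i B_i^{m_i}$). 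Hence the tail/total is bounded by a sum of terms of the form $C^{|m|}/m!$ over the multi-indices $m\in\cN$. Summing $\sum_{m\in\NN^n}C^{|m|}/m!=e^{nC}$ would give a crude exponential bound; to obtain the stated bound with the factorials $(n-1)!$ and $(|\cN|-n-1)!$ in the denominator, one counts the number of multi-indices of a given total degree --- there are $\binom{|m|+n-1}{n-1}$ multi-indices with $|m|$ fixed --- and controls $\sum_{k}\binom{k+n-1}{n-1}C^k/k!$-type sums by the largest relevant degree $|\cN|$, using that on $\cN$ one has $|m|\le|\cN|$ and combinatorial identities for the number of monomials. This is the step I expect to be the main obstacle: reconciling the precise constants $(n-1)!(|\cN|-n-1)!$ and the power $C^{|\cN|+1}$ with a clean summation, since it requires being careful about what ``$|\cN|$'' denotes (cardinality of the index set versus maximal total degree) and about which combinatorial estimate is sharp enough.

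Finally I would assemble the two halves: on the set where $g(\hat{\bX}^{\leq p})\in\cK_\epsilon$, the first part already gives \eqref{property-uat}; the content of \eqref{eq:approximation bound general} is that, \emph{under the additional coefficient constraint} \eqref{eq:a conditions}, the approximation error is controlled by the explicit right-hand side \emph{without} reference to the hypercube, because the Lyons bound automatically confines the $y_i$ to a ball of radius $B_i$ and the coefficients decay fast enough to make the polynomial series (super)convergent there. The key inputs are thus: Hypothesis \ref{hyp} and the resulting continuity of $f$ (established in the Remark preceding the theorem); Theorem \ref{thm:lyons-extension} for the factorial decay $|X^k_{s,t}|_k\le \|\mathbf X\|^k_{p}/(\beta(p)(k/p)!)$, which yields the bound on $|\la\pi_i,\hat{\bX}^{\leq\infty}\ra|$; multivariate polynomial density for the existence of $\bar f_\cN$; and elementary combinatorics of multi-indices for the final constant. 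No step requires new machinery beyond what is recalled in the excerpt; the delicate point is purely the bookkeeping of constants in \eqref{eq:approximation bound general}.
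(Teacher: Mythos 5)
Your strategy coincides with the paper's on both halves: part one is exactly the paper's argument (restrict $f$ to a compact hypercube containing the feature vector and invoke Stone--Weierstrass), and part two rests on the same three ingredients --- the Lyons-extension bound on $|\la\pi_i,\hat{\bX}^{\leq\infty}\ra|$, the coefficient condition \eqref{eq:a conditions} engineered so that the factors $\prod_i B_i^{m_i}$ cancel and leave $C^{|m|}/m!$, and a stars-and-bars count of multi-indices to produce the final constant.

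The one place your write-up goes astray is the index set in the quantitative step. The quantity to be bounded is the error $F(\hat{\bX}^{\leq p})-\bar f_{\cN}(y)$, which (under the implicit assumption that $f$ coincides with the full convergent series $\sum_{m\in\NN^n}\alpha_m y^m$ and $\bar f_{\cN}$ is its truncation) equals the tail $R=\sum_{m\in\NN^n\setminus\cN}\alpha_m y^m$. The paper bounds precisely this tail by $\sum_{m\in\NN^n\setminus\cN}C^{|m|}/m!$, identifies it with a multivariate Taylor remainder of $x\mapsto\exp\bigl(C\sum_i x_i\bigr)$ truncated at $\cN$, and so obtains the factor $C^{|\cN|+1}\exp(C)/|\cN|!$ times the number of multi-indices of the first omitted total degree. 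Your sums instead run over $m\in\cN$ (``on $\cN$ one has $|m|\le|\cN|$''), which bounds the polynomial $\bar f_{\cN}$ itself rather than the error; replacing $\cN$ by its complement turns your argument into the paper's. Your suspicion that the combinatorial constants are the delicate point is well founded: the paper's own bookkeeping is loose there (the stars-and-bars count of $\{|m|=|\cN|+1\}$ is $\binom{|\cN|+n}{n-1}$ rather than the claimed $\binom{|\cN|}{n-1}$, and even the latter would yield a denominator $(n-1)!(|\cN|-n+1)!$ rather than the stated $(n-1)!(|\cN|-n-1)!$), so one should not expect a clean derivation of \eqref{eq:approximation bound general} exactly as written.
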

\begin{proof}
    This is a simple consequence of the Stone-Weierstrass approximation theorem for continuous functions, using that there always exists a compact subset $\cK_\epsilon\subset \RR^n$ such that 
    \[
    \left(\la \pi_1,\hat{\bX}_{0,T}^{\leq \infty}\ra,\ldots,\la \pi_n,\hat{\bX}_{0,T}^{\leq \infty}\ra\right)\in \cK_\epsilon.
    \]
    Restricting the domain of $f$ to $\cK_\epsilon$, we are done showing \eqref{property-uat}. For the convergence rate \eqref{eq:approximation bound general}, define the remainder term
    \[
    R := \sum_{m\in \NN^n\setminus \cN} \alpha_m \left( \la \pi_1,\hat{\bX}_{0,T}^{\leq \infty}\ra,\ldots,\la \pi_n,\hat{\bX}_{0,T}^{\leq \infty}\ra \right)^m  
    \]
    Invoking the bound on the signature decay and the assumption on $|\alpha_m|$, we see that 
\begin{equation*}
    |R|\leq  \sum_{m\in \NN^n\setminus \cN} |a_m| \prod_{i=1}^n \left(\sum_{j=1}^N |\kappa_{ij}|\frac{K^{|w_{ij}|}}{(|w_{ij}|/p)!}\right)^{m_i}\leq  \sum_{m\in \NN^d\setminus \cN} \frac{C^{|m|}}{m!}.  
\end{equation*}
The right hand side of this inequality corresponds to  the remainder term of a multivariate Taylor approximation of the function $\exp(C\prod_{i=1}^nx_i)$ around $0$ up to order $m\in\mathcal N$. Thus from the multivariate Taylor theorem, it follows that 
\[
|R| \leq  \frac{\exp(C)C^{|\cN|+1}}{|\cN|!}\sum_{|m|=|\cN|+1}1
\]
By an elementary combinatorial argument (using the so-called "stars and bars"-argument) we see that $\sum_{|m|=|\cN|+1}1=\binom{|\cN|}{n-1}$. This concludes the proof.

\end{proof}

\begin{rem}
 While Hypothesis \ref{hyp} is certainly limiting the class of functionals $F$ that we can analyze, universal approximation becomes easier. In addition, the assumption that $F$ only acts on compact subsets of $p$-variation paths with values in the space of geometric rough paths, i.e. $\hat{V}^p([0,T];G^{\lfloor p\rfloor}(\RR^{d+1}))$,  is dropped, allowing for an easier verification of universality. For later probabilistic arguments related to financial prices as expected functionals, this point will simplify computations and discussions.  

Furthermore, the classical assumption that the approximation holds over compact subsets $\cK\subset \hat{V}^p([0,T];T^{\lfloor p\rfloor}(\RR^{d+1})) $, as seen in Theorem \ref{thm:universal general}, the compactness statement in Theorem \ref{thm:universal under hyp} significantly simplifies this. Indeed, describing compact subsets of $\hat{V}^p([0,T];T^{\lfloor p\rfloor}(\RR^{d+1}))$ can be a challenging task, as illustrated in e.g. \cite{gulgowski24}. In Theorem \ref{thm:universal under hyp} one essentially only need to choose a bound $M>0$, and one can consider any $\hat{\bX}^{\leq p}_{0,\cdot} \in\hat{V}^p([0,T];T^{\lfloor p\rfloor}(\RR^{d+1}))$ such that 
\[
|\la \pi_i, \hat{\bX}_{0,T}^{\leq \infty }\ra  |\leq M,\quad \forall \,\,i=1,\ldots,n. 
\]
The compact subsets of an infinite dimensional space is therefore replaced by a (something that may be interpreted as)  bounded subsets. This can also make probabilistic statements easier, as will be illustrated in subsequent sections.  
\end{rem}

\begin{rem}
    It is important to note that in Theorem $\ref{thm:universal under hyp}$ the statement allows for any functional acting on the space of $p$-variation paths with values in the truncated tensor algebra. This is a significant difference with the classical universal approximation theorem for signatures stated in Proposition \ref{prop:universal approximation}, as the space of geometric rough paths limits the possible structure of the $F$ under consideration. In particular, a canonical asset pricing model would be constructed from semi-martingales and It\^o processes. To preserve a martingale property of the derivative prices, one then use the It\^o integral for computing derivatives prices, an integration choice which in the sense of signatures is not geometric. In contrast, under Hypothesis \ref{hyp} one can easily work with functionals that structurally contain It\^o integration, and still obtain a direct and descriptive approximation of the functional in terms of the signature associated to the price path. 
\end{rem}

\begin{rem}
    Computationally, Hypothesis \ref{hyp}, given a specific $F$, one  only requires the computation of $n$ terms from the signature, and not the full signature, and with these $n$ terms one can achieve as high accuracy as desired for functional approximation. This is in stark contrast to the much more general Universal approximation theorem in Proposition \ref{prop:universal approximation} and Theorem \ref{thm:universal general}, where the accuracy of the approximation is dictated by number of signature terms included. Invoking Hypothesis \ref{hyp} therefore has the potential to reduce computational time significantly. 
\end{rem}
\begin{rem}
    The condition assumed on the coefficients $\{\alpha_m\}$ in \eqref{eq:a conditions} yields the bounds in \eqref{eq:approximation bound general}. Different assumptions on $\{\alpha_m\}$ will yield different convergence rates. While the condition in \eqref{eq:a conditions} is seemingly abstract, it can be verified to be weaker than the conditions satisfied by the coefficients in a Taylor expansion. On the other hand, the condition is not satisfied by a much "slower" convergent polynomial series, such as the Bernoulli polynomials. A more clear illustration of this condition will be given by the subsequent examples. 
\end{rem}

A restriction of the functional approximation in Theorem \ref{thm:universal under hyp} to the case of functionals on geometric rough paths can readily be seen as a special case of the classical universal approximation theorem presented in Proposition \ref{prop:universal approximation}. Indeed, by the next lemma we obtain a corollary that coincides with Proposition \ref{prop:universal approximation}.

\begin{lem}\label{lem:cont}
    For $n\in\NN$ and weighted words $\pi_1,\dots,\pi_n\in \RR\la \{1,\dots,d+1\}\ra$, define a function $g:\hat{V}^{p}([0,T],T^{\lfloor p\rfloor}(\RR^{d+1}))\rightarrow \RR^n$ by \[g(\hat{\mathbf{X}}_{0,\cdot}^{\leq p})=\left(\la\pi_1,\hat{\mathbf{X}}^{\leq\infty}_{0,T}\ra,\dots,\la\pi_n,\hat{\mathbf{X}}^{\leq\infty}_{0,T}\ra\right),\]where the toppology on $\hat{V}^{p}([0,T],T^{\lfloor p\rfloor}(\RR^{d+1}))$ is induced by the $p$-variation metric (see \ref{def:p-var}).  Then $g$ is continuous.
\end{lem}
\begin{proof}
    Since all norms on $\RR^n$ are equivalent, we will in this proof consider the $\infty$-norm. For any $\epsilon>0$ we can choose a $\delta>0$ such that the Mittag-Leffler function satisfies $E_{1/p,0}(\delta)\leq \epsilon \frac{\beta(p)}{\|\pi_{k'}\|_{\cF(\RR^{d+1})}}$, where $k'\in\{1,\dots, n\}$ satisfies $\|\pi_{k'}\|=\max_{1\leq k\leq n}\|\pi_k\|_{\cF(\RR^{d+1})}$. Then for any elements in  $\hat{V}^{p}([0,T],T^{\lfloor p\rfloor}(\RR^{d+1}))$ satisfying $\|\hat{\mathbf{X}}_{0,\cdot}^{\leq p}-\hat{\mathbf{Y}}_{0,\cdot}^{\leq p}\|_{p,[0,T]}<\delta,$ we have that
    \[\|g(\hat{\mathbf{X}}_{0,\cdot}^{\leq p})-g(\hat{\mathbf{Y}}_{0,\cdot}^{\leq p})\|_\infty=\max_{1\leq k\leq n}\left|\la \pi_k,\hat{\mathbf{X}}_{0,T}^{\leq \infty}-\hat{\mathbf{Y}}_{0,T}^{\leq \infty}\ra\right|<\epsilon.\]
\end{proof}

\begin{cor}
      Let $F$ be a continuous functional on the space of $p$-variation (extended) geometric rough paths,  $\hat{V}^p([0,T];G^{\lfloor p\rfloor}(\RR^{d+1}))$, and suppose Hypothesis \ref{hyp} holds with a sequence $\{\pi_i\}_{i=1}^n$ and a \emph{continuous} function $f$. Then for any $\epsilon>0$ there exists finite set $\cN\subset \NN^n$ and a compact $\cK\subset \hat{V}^p([0,T];G^{\lfloor p\rfloor}(\RR^{d+1}))$, such that 
    \begin{equation*}
       \left|F(\hat{\bX}^{\leq p}_{0,\cdot})- \sum_{m\in \cN} \alpha_m \la \phi_m ,\hat{\bX}_{0,T}^{\leq \infty}\ra \right|<\epsilon, \quad \forall \hat{\bX}^{\leq p}_{0,\cdot}\in \cK.
    \end{equation*}
\end{cor}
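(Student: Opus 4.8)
The plan is to run the argument of Theorem~\ref{thm:universal under hyp} to produce a polynomial approximation in the $n$ signature coordinates, and then to use that for a \emph{geometric} rough path the shuffle property (Lemma~\ref{lemma:shuffle}) turns such a polynomial into a single linear pairing with the signature. So the proof splits into (a) a Weierstrass step on the function $f$ of Hypothesis~\ref{hyp}, and (b) a purely algebraic ``linearisation via shuffle'' step.

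For (a): let $\cK\subset\hat V^p([0,T];G^{\lfloor p\rfloor}(\RR^d))$ be any compact set. The map $g:\hat{\bX}^{\leq p}\mapsto\bigl(\la\pi_1,\hat{\bX}^{\leq\infty}\ra,\ldots,\la\pi_n,\hat{\bX}^{\leq\infty}\ra\bigr)$ is continuous, by Lyons' extension theorem~\ref{thm:lyons-extension} together with continuity of the dual pairing of Remark~\ref{rem1}, so $g(\cK)\subset\RR^n$ is compact. Under Hypothesis~\ref{hyp} we have $F=f\circ g$ with $f$ continuous, hence, by the Weierstrass approximation theorem on the compact set $g(\cK)$ (this is exactly the argument behind Theorem~\ref{thm:universal under hyp}), there is a finite set $\cN\subset\NN^n$ and coefficients $\{\alpha_m\}_{m\in\cN}$ such that the polynomial $\bar f_{\cN}(x)=\sum_{m\in\cN}\alpha_m x^m$ satisfies $|f(y)-\bar f_{\cN}(y)|<\epsilon$ for all $y\in g(\cK)$, and therefore
\[
\Bigl|F(\hat{\bX}^{\leq p})-\sum_{m\in\cN}\alpha_m\prod_{i=1}^n\la\pi_i,\hat{\bX}^{\leq\infty}\ra^{m_i}\Bigr|<\epsilon,\qquad \forall\,\hat{\bX}^{\leq p}\in\cK.
\]

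For (b): this is where geometricity is used. For each multi-index $m=(m_1,\ldots,m_n)\in\cN$ set
\[
\phi_m:=\pi_1^{\shuffle m_1}\shuffle\cdots\shuffle\pi_n^{\shuffle m_n}\in T(\RR^d),
\]
where $\pi_i^{\shuffle k}$ is the $k$-fold shuffle product of $\pi_i$ with itself and $\pi_i^{\shuffle 0}:=\emptyset$; this is well defined since $\shuffle$ is commutative and associative on $T(\RR^d)$, and $\phi_m\in T(\RR^d)\simeq T((\RR^d))^*$. By repeated application of the shuffle property (Lemma~\ref{lemma:shuffle}), valid since $\hat{\bX}^{\leq p}$ is geometric, one gets $\la\phi_m,\hat{\bX}^{\leq\infty}\ra=\prod_{i=1}^n\la\pi_i,\hat{\bX}^{\leq\infty}\ra^{m_i}$, exactly as in the ``shuffle trick'' computation of Section~\ref{sec:basics}, now applied to several distinct words. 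Substituting $\sum_{m\in\cN}\alpha_m\prod_{i=1}^n\la\pi_i,\hat{\bX}^{\leq\infty}\ra^{m_i}=\sum_{m\in\cN}\alpha_m\la\phi_m,\hat{\bX}^{\leq\infty}\ra$ into the bound from (a) gives the claim. (One may further note that $\sum_{m\in\cN}\alpha_m\phi_m$ is itself a single element of $T(\RR^d)\simeq T((\RR^d))^*$, so the approximant equals $\la\sum_{m\in\cN}\alpha_m\phi_m,\hat{\bX}^{\leq\infty}\ra$, i.e. it is literally of the linear form appearing in Proposition~\ref{prop:universal approximation}.)

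I do not expect a genuine obstacle: the corollary is essentially a bookkeeping reformulation of Theorem~\ref{thm:universal under hyp} in the geometric case. The only points deserving an explicit line are that $T(\RR^d)$ is closed under $\shuffle$, so that each $\phi_m$ is a bona fide linear functional on the signature, and that Lemma~\ref{lemma:shuffle}, stated for two factors, extends to finitely many factors by an immediate induction on the number of factors. Continuity of $g$, compactness of $g(\cK)$, and polynomial approximability of the continuous $f$ are all immediate from results already recalled, so no further analysis is required.
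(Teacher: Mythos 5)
Your proposal is correct and follows essentially the same route as the paper: the paper's own (very terse) proof likewise invokes the polynomial approximation from Theorem~\ref{thm:universal under hyp} and then uses Lemma~\ref{lemma:shuffle} to replace each monomial $\prod_i\la\pi_i,\hat{\bX}^{\leq\infty}\ra^{m_i}$ by a single pairing $\la\phi_m,\hat{\bX}^{\leq\infty}\ra$. The only difference is that you spell out the explicit shuffle formula $\phi_m=\pi_1^{\shuffle m_1}\shuffle\cdots\shuffle\pi_n^{\shuffle m_n}$ and the induction extending the two-factor shuffle identity, which the paper leaves implicit.
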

\begin{proof}
    Let $\cK\subset\hat{V}^p([0,T];G^{\lfloor p\rfloor}(\RR^{d+1}))$ be compact (choose a finite union of singletons if you must) and consider the function $g:\hat{V}^p([0,T];G^{\lfloor p\rfloor}(\RR^{d+1}))\rightarrow \RR^n$ given by $g(\hat{\mathbf{X}}^{\leq p}_{0,\cdot})=\left(\la \pi_1,\hat{\mathbf{X}}^{\leq \infty}_{0,T}\ra,\dots,\la \pi_n,\hat{\mathbf{X}}^{\leq \infty}_{0,T}\ra\right)$. By Lemma \ref{lem:cont}, we have that $\mathcal{K}_\epsilon:=g(\mathcal{K})$ is compact. Thus $f|_{\mathcal{K}_\epsilon}\in C(\mathcal{K}_\epsilon)$ is a continuous function on a compact set and we can find a polynomial approximating $f|_{\cK_\epsilon}$ by the classical Stone-Weierstass theorem.
    Since now $\hat{\bX}^{\leq \infty}$ is a geometric rough path, it follows from Lemma \ref{lemma:shuffle} that for $m\in \NN^n$ there exists a linear functional  $\phi_m\in T((\RR^{d+1})^*)$ such that 
    \[
    \left(\la \pi_1,\hat{\bX}_{0,T}^{\leq \infty}\ra,\ldots,\la \pi_n,\hat{\bX}_{0,T}^{\leq \infty}\ra\right)^m = \la  \phi_m, \hat{\bX}_{0,T}^{\leq \infty}\ra. 
    \]
    Thus, inserting this into the polynomial, the result follows. 
\end{proof}
\begin{rem}\label{rem:computational efficiency}

    Note that the linear functionals $\phi_m$ quickly become very large sums of words, even when the $\pi_i$'s consist of elementary words. As an example, for some single letter $a\in \{1,\ldots, d\}$, consider the product $\la a,\hat{\bX}_{0,T}^{\leq \infty}\ra^k$ for some potentially large $k\in \NN$. Then doing the $k$th power of the shuffle product of $a$, yields the word $a\ldots a$ ($a$ repeated $k$ times), and we get the weight $k!$ in front, i.e., 
    \[
    \la a,\hat{\bX}_{0,T}^{\leq \infty}\ra^k=k!\la a\ldots a,\hat{\bX}_{0,T}^{\leq \infty} \ra.  
    \]
    See, e.g., \cite{BOWMAN200243} and the references therein for a longer exposition of the shuffle product and algebras. 
   It becomes quickly expensive to compute higher order signature terms. However, as long as Hypothesis \ref{hyp} is in place, signature computations can be made much more efficient if what one really needs is only to compute the power of the number $\la a,\hat{\bX}_{0,T}^{\leq \infty}\ra $. When computing expected values, this is often the situation. 
    
\end{rem}

\subsection{Examples of functions}

We will in this Subsection consider a few examples of functions that can be approximated, and investigate their convergence properties.  As already discussed, most examples of financial payoff functionals only considers the simpler case when Hypothesis \ref{hyp} holds. That is, for each specific payoff functional $F(\hat{\bX}^{\leq p}_{0,\cdot})$  there exists  a finite number of linear operators $\pi_i\in T(\RR^{d+1})$ for $i=1,\ldots,n$ such that 
\[
F(\hat{\bX}_{0,\cdot}^{\leq p}) = f(\la \pi_1, \hat{\bX}^{\leq \infty}_{0,T}\ra,\ldots, \la \pi_n, \hat{\bX}^{\leq \infty}_{0,T}\ra).
\] 
From both the Universal approximation theorem in Proposition \ref{prop:universal approximation} or from  Theorem \ref{thm:universal under hyp} we know there exists an associated approximation in terms of the signature of the (rough path lifted) price path (either as a linear combination of signature terms, or as a polynomial of a finite number of signature terms). However, while there is no standard way of finding and describing the linear functional $\pi$ in Proposition \ref{prop:universal approximation}, there is much theory available to compute potential sequences of $\{\alpha_m\}_{m\in \cN}$ to obtain a good approximation in Theorem \ref{thm:universal under hyp}. 

We provide now three elementary examples of such approximation choices. 

\begin{ex}[Taylor polynomials]
Suppose the payoff functional $F$ can be identified through hypothesis \ref{hyp} with an analytic function $f\in C^\infty(\RR)$. Then an elementary Taylor expansion of $f(\la \pi,\bX^{\leq \infty}_{0,T}\ra)$ around $0$ yields 
\[
F(\hat{\bX}_{0,\cdot}^{\leq p}) = f(\la \pi,\hat{\bX}^{\leq \infty}_{0,T}\ra)= \sum_{n=0}^\infty \frac{f^{(n)}(0)}{n!}\la \pi,\hat{\bX}^{\leq \infty}_{0,T}\ra^n,
\]
where $f^{(n)}$ denotes the $n$'th derivative of $f$. Moreover, under less restrictive regularity assumptions, we can truncate this sum at any level $k\in \NN$ and explicitly determine the error we make by the formula 
\[
f(\la \pi,\hat{\bX}^{\leq \infty}_{0,T}\ra)=\sum_{n=0}^k\frac{f^{(n)}(0)}{n!}\la \pi,\hat{\bX}^{\leq \infty}_{0,T}\ra^n + R_k(\la \pi,\hat{\bX}^{\leq \infty}_{0,T}\ra)
\]
where there exists some $b$ between $0$ and $\la \pi,\bX^{\leq \infty}_{0,T}\ra$ such that
\[
R_k(x)=\frac{f^{(k+1)}(b)}{(k+1)!}\la \pi,\hat{\bX}^{\leq \infty}_{0,T}\ra^{k+1}.
\]
This yields an analytic expression for the remainder term in Theorem \ref{thm:universal under hyp} when 
$f$ is sufficiently regular.
\end{ex}

\begin{ex}[Hermite polynomials]\label{hermite}
 Let $\pi \in T(\RR^{d+1})^*$ and $K>0$ and suppose we have an option that pays 
    \[
    F(\hat{\bX}_{0,\cdot}^{\leq p})=f(\la \pi,\hat{\bX}_{0,T}^{\leq \infty}\ra)=\max (\la \pi,\hat{\mathbf{X}}^{\leq \infty}_{0,T}\ra - K, 0). 
    \] 
  The function  $\max(x-K,0)$ can be approximated by Hermite polynomials. More precisely, for $n\in \NN_0$ the $n$'th Hermite polynomial is given by 
    \[
    \xi _n(x)=(-1)^n\frac{1}{w(x)}\frac{d^n}{dx^n}w(x) \quad \mathrm{ where} \quad  w(x)=\frac{1}{\sqrt{2\pi}}e^{\frac{-x^2}{2}},
    \]
    is the density of the standard normal distribution $\cN(0,1)$. Note that $\xi_0=1$. Moreover, for the Hilbert space $L^2(\RR,w(x)dx)=:L^2_w$ with inner product 
    \[
    \la g,h\ra_{L^2_w}:=\int_{-\infty}^\infty g(x)h(x)w(x)dx
    \]
    we obtain an orthonormal basis $\{e_n\}_{n=0}^\infty$ given by 
    \[
    e_n(x)=\frac{\xi_n(x)}{\sqrt{n!}}.
    \] In particular, any function $g\in L^2_w$ can be written as 
    \[
    g(x)=\sum_{n=0}^\infty \alpha_ne_n(x),\quad \mathrm{where} \quad  \alpha_n=\la g, e_n\ra_{L^2_w}.
    \]
    As argued in \cite{math9020124}, the function 
    $f(x)=\max(x-K,0)$ for some constant $K\in\RR$, belongs to $L_w^2.$ 
 From this Hermite polynomial expansion,   we have an exact formula for $F(\hat{\bX}_{0,\cdot}^{\leq p})$ given by 
    \[
    F(\hat{\bX}_{0,\cdot}^{\leq p})=\sum_{n=0}^\infty \la f,e_n\ra_{L^2_w} e_n(\la \pi,\hat{\mathbf{X}}^{\leq \infty }_{0,T}\ra).
    \]
    Moreover, we can truncate this sum at any desired level $N$ to reach a suitable approximation by 
    \[
    F_N(\hat{\bX}_{0,\cdot}^{\leq p}):=\sum_{n=0}^N \la f,e_n\ra e_n(\la \pi,\hat{\mathbf{X}}^{\leq \infty}_{0,T}\ra).
    \] 
    
\end{ex}

\begin{ex}[Bernstein approximation]
    The standard choice of approximation of a continuous function by a polynomial is arguably the Bernstein polynomial. Any continuous function $f:[0,1]\rightarrow \RR$ can be approximated arbitrarily well as follows 
\begin{equation*}
    B_n(f)(x)= \sum_{k=1}^n f\left(\frac{k}{n}\right)b_{k,n}(x), \quad \mathrm{{\it where}} \quad     b_{k,n}(x)= {n\choose k}x^{k}(1-x)^{n-k},
\end{equation*}
and $\lim_{n\rightarrow\infty}B_n(f)=f$.
Again, if an option pays $F(\hat{\bX}_{0,\cdot}^{\leq p})$, and  it satisfies Hypothesis \ref{hyp} with a continuous function $f:[0,1]\rightarrow \RR$ and a $\pi\in \cF(\RR^{d+1})$ such that $\langle \pi,\hat{\bX}^{\leq \infty}_{0,T}\rangle\in [0,1]$ and 
\[
F(\hat{\bX}_{0,\cdot}^{\leq p})=f(\langle \pi,\hat{\bX}^{\leq \infty}_{0,T}\rangle), 
\] 
we have the Bernstein approximation
\begin{align*}
    F(\hat{\bX}_{0,\cdot}^{\leq p})&\approx \sum_{k=1}^n f\left(\frac{k}{n}\right)b_{k,n}(\langle \pi, \hat{\bX}^{\leq \infty}_{0,T}\rangle).
\end{align*}

\end{ex}

\section{Stochastic market prices - lifting to rough paths}\label{sec:stochastics}

While the methodologies for numerical approximation of complex derivative payoffs we propose here will be in the spirit of model free finance, we will also connect the results to classical pricing when the underlying stock is assumed to be a semi-martingale. More specifically, we consider the case of an It\^o process of the form 
\begin{equation}\label{eq:ito process}
    \dd Y_t = \mu_t \dd t + \sigma_t \dd B_t, \quad Y_0 =y\in \RR^d.
\end{equation}
Here  $\mu$ and $\sigma$ are square integrable processes and adapted to the filtration $\{\cF_t\}_{t\in [0,T]}$ generated by the Brownian motion  $\{B_t\}_{t\in [0,T]}$. The process $Y$ may be used to model the log-price dynamics of an asset price, or the absolute price, the volatility or any other relevant stochastic asset dynamic. 

In light of the theory for rough paths and signatures presented briefly in the beginning of Section \ref{sec:basics}, it is only natural to ask whether the signature can be constructed above the stochastic process $\{Y\}_{t\in [0,T]}$. More precisely, one wants to make sure that for almost all $\omega\in \Omega$, the following map exists  
\[
Y(\omega)\mapsto \bY^{\leq \infty}(\omega) =\left(1,Y(\omega),\left(\int Y\otimes \dd Y\right)(\omega),\ldots\right)\in V^p(\triangle_T,T((\RR^d))).  
\]
Since $B$ is a Brownian motion, the regularity of $Y$ will be of finite $p$-variation for $p\geq 2$. There is therefore no canonical construction of the iterated integral $\int Y(\omega)\dd Y(\omega)$. However, since $Y$ is a semi-martingale, we can use this probabilistic structure to construct the iterated integrals $   \int Y \dd Y $ as random variables in $L^2(\Omega)$. As is well-known, there exist different choices of constructing this integral as a random variable, with It\^o or Stratonovich integration as the most commonly used. It is up to the application at hand which integral to use for the specific task. Typically, in financial models, It\^o integration is selected as this preserves adaptedness and a martingale structure, necessary for arbitrage-free pricing.  Given the choice of integration, using the Burkholder-Gundy-Davis inequality, one can then apply Kolmogorov's continuity theorem to identify a subset $\Omega^*\subset \Omega$ of full measure such that for each $\omega\in \Omega^* $ there exists a realization of the iterated integral $\left(\int Y\otimes \dd Y\right)(\omega)$, see e.g. \cite[Section 3]{frizhairer}. Moreover, one can verify that this object satisfies Chen's relation 
\begin{multline*}
    Y_{s,u}(\omega)\otimes Y_{u,t}(\omega)
    =\left(\int_s^t Y_{s,r}\otimes \dd Y_r\right)(\omega)
    \\
    -\left(\int_s^u Y_{s,r}\otimes\dd Y_r\right)(\omega)-\left(\int_u^t Y_{u,r}\otimes\dd Y_r\right)(\omega). 
\end{multline*}
We can therefore conclude that $\left(Y(\omega),\left(\int Y\otimes \dd Y\right)(\omega)\right)$ is a $2$-rough path according to Definition \ref{def:rough path}. By applying Theorem \ref{thm:lyons-extension} we know that also the signature $\bY^{\leq  \infty}(\omega)$ exists. 

For practical financial  purposes, we are interested in computing the expected value of multivariate monomials of signature functionals, as will be seen as a crucial component of the pricing approximation. More precisely, the price $p_t$ at time $t\geq 0$ of a contingent claim on a financial asset with payoff at some future time $T\geq t$ can be written as 
\begin{equation*}
    p= \EE[F(\hat{Y})]. 
\end{equation*}
where $F$ is a pay-off functional, possibly dependent on the whole price path $Y$. As previously described, a pay-off functional $F$ is a functional on the rough paths lift $\bY^{\leq p}_{0,\cdot}$  to show the dependence on the  chosen rough paths lift, i.e. stochastic  integration choice. In the next section we will show how this price, given as the conditional expectation,  can be approximated by a sum of different correlators of signature terms of the form 
\begin{equation*}
    \EE[\prod_{i=1}^n \la \pi_i,\hat{\bY}_{0,T}^{\leq \infty}\ra^{m_i}], \quad \mathrm{for} \quad \{\pi_i\}_{i=1}^n \in T((\RR^d)^*),\quad m\in \NN^n. 
\end{equation*}
This is in contrast to the functional approximation considered in \cite{Arribasetal20} where one  computes the complete expected signature $\EE[\hat{\bY}^{\leq \infty}_{0,T}]$, or dynamically as $\mathbb E[\hat{\mathbf{Y}}^{\leq\infty}_{0,T}\,\vert\,\mathcal F_t]$, and then consider $\la \pi, \EE[\hat{\bY}^{\leq \infty}_{0,T}]\ra$. In the setting of an It\^o process $Y$, the latter methodology requires one to solve a (very) high dimensional Kolmogorov equation as described on   \cite{LyonsNi2015}. However, since $Y$ is an It\^o process, then one can show that $t\mapsto \la \pi,\hat{\bY}^{\leq \infty}_{0,t}\ra $ is a real valued It\^o process, as proven in the proposition below.  Thus computing $\EE[f(\la \pi,\hat{\bY}^{\leq \infty}\ra)]$ can be done by standard use of a (low) dimensional Kolmogorov equation, and then one must do this computation for  $f(x)=x^n$ and several different $n$. Indeed, we provide a simple proof of this claim: 

\begin{prop}
Suppose $Y$ is an It\^o process of the form \eqref{eq:ito process} where $\mu\in L^2(\Omega\times [0,T])$ and $\sigma\in L^\infty(\Omega\times [0,T])$ are adapted to  the filtration generated by the Brownian motion. Consider the It\^o lift $Y\mapsto \bY^{\leq \infty}$. Then for any $\pi\in T((\RR^d)^*)$ the process $t\mapsto \la \pi,\bY^{\leq \infty}\ra $ is an It\^o process. 
\end{prop}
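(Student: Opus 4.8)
The plan is to reduce to a single word by linearity and then run an induction on word length, at each step invoking the elementary fact that an It\^o integral of a continuous adapted process against an It\^o process is again an It\^o process.

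\emph{Reduction and set-up.} Since $\pi\mapsto\la\pi,\bY^{\leq\infty}_{0,t}\ra$ is linear and $\pi$ is by definition a finite linear combination of words, it suffices to prove the claim for a single word $w=\mathbf i_1\cdots\mathbf i_n\in\cW_n$. Guided by the recursive description of the signature (Remark \ref{remrecursive}), I would introduce, for every word $w$, the iterated It\^o integral $Z^w$ defined by $Z^\emptyset_t\equiv 1$ and, for $w=w'\mathbf i_n$ with $w'=\mathbf i_1\cdots\mathbf i_{n-1}$,
\[
Z^w_t:=\int_0^t Z^{w'}_r\,\dd Y^{i_n}_r,
\]
the integral taken in the It\^o sense, and postpone to the end the identification $Z^w_t=\la w,\bY^{\leq\infty}_{0,t}\ra$.

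\emph{The induction.} The base cases are clear: $Z^\emptyset\equiv1$, and $Z^{(\mathbf i)}_t=Y^i_t-Y^i_0$ is an It\^o process by hypothesis. For the inductive step, assume $Z^{w'}$ is an It\^o process; in particular it is continuous and $\{\cF_t\}$-adapted, hence almost surely bounded on $[0,T]$. Substituting $\dd Y^{i_n}_r=\mu^{i_n}_r\,\dd r+(\sigma_r\,\dd B_r)^{i_n}$ into the recursion gives
\[
Z^w_t=\int_0^t Z^{w'}_r\mu^{i_n}_r\,\dd r+\int_0^t Z^{w'}_r(\sigma_r\,\dd B_r)^{i_n}.
\]
Both terms are well defined: boundedness of $\sigma$ together with the a.s.\ boundedness of $Z^{w'}$ makes the integrand of the stochastic term a.s.\ square integrable in time, so that term is a continuous local martingale; and $\mu\in L^2(\Omega\times[0,T])$ gives $\int_0^T|\mu_r|\,\dd r<\infty$ a.s.\ by Cauchy--Schwarz, so the first term is an absolutely convergent continuous finite-variation process. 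Hence $Z^w$ is again an It\^o process, with drift $Z^{w'}_r\mu^{i_n}_r$ and diffusion obtained from $Z^{w'}_r$ and the $i_n$-th row of $\sigma_r$, and the induction closes.

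\emph{Identification with the signature, and the main difficulty.} It remains to check $Z^w_t=\la w,\bY^{\leq\infty}_{0,t}\ra$. I would do this by verifying that the family $(Z^w)_{w\in\cW}$ assembles into a multiplicative functional $\triangle_T\ni(s,t)\mapsto\mathbf Z_{s,t}\in T((\RR^d))$ --- Chen's relation for iterated It\^o integrals, which at level two is exactly the computation recalled just above \cite[Section 3]{frizhairer} --- of finite $p$-variation for any $p\in(2,3)$, by the Burkholder--Davis--Gundy and Kolmogorov estimates already used to construct $\bY^{\leq\infty}$. Its truncation to levels $\le 2$ is the It\^o lift of $Y$, so by the uniqueness in Lyons' extension theorem (Theorem \ref{thm:lyons-extension}) one gets $\mathbf Z=\bY^{\leq\infty}$, and linearity finishes the proof. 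The only genuinely non-routine point is this identification: one needs that the higher levels of $\bY^{\leq\infty}$, generated by the \emph{pathwise} Lyons extension, agree with iterated It\^o integrals --- equivalently, that rough integration against the semimartingale rough path of $Y$ coincides with It\^o integration for adapted integrands. The induction itself uses nothing beyond $Y$ being a bona fide It\^o process, so the hypotheses $\mu\in L^2$ and $\sigma\in L^\infty$ are comfortably stronger than what is required.
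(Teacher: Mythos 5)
Your overall strategy --- reduce to a single word by linearity, then induct on the word length using the recursive structure $\la w'\mathbf{i}_n,\bY^{\leq\infty}_{s,t}\ra=\int_s^t\la w',\bY^{\leq\infty}_{s,r}\ra\,\dd Y^{i_n}_r$ and the decomposition $\dd Y^{i_n}=\mu^{i_n}\dd r+(\sigma\,\dd B)^{i_n}$ --- is exactly the paper's. There are, however, two points where you diverge. First, and more importantly, your inductive hypothesis is too weak for the notion of It\^o process the paper actually uses. In \eqref{eq:ito process} an It\^o process has \emph{square integrable} coefficients, so the inductive step must show that $\la w',\bY^{\leq\infty}_{s,\cdot}\ra\,\sigma^{i_n,j}$ lies in $L^2(\Omega\times[s,t])$; almost-sure boundedness of the path $r\mapsto\la w',\bY^{\leq\infty}_{s,r}\ra$ does not give this, since the pathwise bound is not uniform in $\omega$. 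The paper's induction therefore carries the hypothesis ``square integrable and adapted'' forward and closes it via H\"older's inequality, $\|\la w',\bY^{\leq\infty}_{s,\cdot}\ra\,\sigma^{i_n,j}\|_{L^2(\Omega\times[s,t])}\leq\|\sigma^{i_n,j}\|_{L^\infty(\Omega\times[s,t])}\|\la w',\bY^{\leq\infty}_{s,\cdot}\ra\|_{L^2(\Omega\times[s,t])}$ --- this is precisely where the hypothesis $\sigma\in L^\infty(\Omega\times[0,T])$ is consumed, so your closing remark that the assumptions are ``comfortably stronger than what is required'' misses their role. Your argument as written only yields a continuous semimartingale with a local-martingale part; it is easily repaired by strengthening the inductive hypothesis as the paper does. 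Second, the identification of $\la w,\bY^{\leq\infty}\ra$ with the iterated It\^o integral, which you single out as the main difficulty and propose to settle via Chen's relation and uniqueness in Lyons' extension theorem, is not addressed in the paper at all: Remark \ref{remrecursive} is taken as the definition of the projections for the chosen (It\^o) lift, so the paper's proof starts from that recursion. Your extra verification is a legitimate point of rigour but is additional to, not a replacement for, the integrability bookkeeping that constitutes the actual content of the paper's proof.
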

\begin{proof}
   Let $\pi=w$ be a single word of length $n$, given of the form $w=\mathbf{i}_1\dots \mathbf{i}_n$ for $\mathbf{i}_j\in \{1,\ldots,d\}$ for all $j=1,\ldots, n$. If $n =1 $, it follows that $t\mapsto \la \mathbf{i}_1,\bY^{\leq \infty}\ra $ is an It\^o process by  definition.  When $n\geq 2$, assume that for all words $w$ of length $n-1$, $\la w,\bY^{\leq \infty}\ra $ is a square integrable and adapted.  We  then use the recursive definition of the signature in \ref{remrecursive} to see that 
   \begin{multline*}
          \la \mathbf{i}_1\dots \mathbf{i}_n,\bY^{\leq \infty}_{s,t}\ra 
          =\int_s^t \la \mathbf{i}_1\dots \mathbf{i}_{n-1},\bY^{\leq \infty}_{s,r}\ra \mu_r^{i_n}\dd r
       \\
       + \sum_{j=1}^d \int_s^t \la \mathbf{i}_1\dots \mathbf{i}_{n-1},\bY^{\leq \infty}_{s,r}\ra \sigma_r^{i_n,j}\dd B^{j}_r.
  \end{multline*}
  To see that this process  is an Ito process we must verify that it is square integrable. Using that $\sigma\in L^\infty(\Omega\times [0,T])$, then by  H\"older's inequality we have that for all $[s,t]\subset [0,T]$
  \begin{align*}
  &\| \la \mathbf{i}_1\dots \mathbf{i}_{n-1},\bY^{\leq \infty}_{s,\cdot}\ra \sigma_\cdot^{i_n,j}\|_{L^2(\Omega\times [s,t])} \\
  &\qquad\qquad\leq \|\sigma^{i_n,j}\|_{L^\infty(\Omega\times [s,t])}  \| \la \mathbf{i}_1\dots \mathbf{i}_{n-1},\bY^{\leq \infty}_{s,\cdot}\ra \|_{L^2(\Omega\times [s,t])}.
  \end{align*}
  By the inductive hypothesis, $ \| \la \mathbf{i}_1\dots \mathbf{i}_{n-1},\bY^{\leq \infty}_{s,\cdot}\ra \|_{L^2(\Omega\times [0,T])}<\infty$ and so the product of the two is square integrable. Adaptedness follows immediately by the inductive hypothesis.  
\end{proof}
For certain choices of $\pi\in T((\RR^d)^{*})$ and assumptions of the underlying stochastic process $Y$, we can compute explicit expressions for the signature moments of the form  $\EE[\la \pi, \bY^{\leq \infty}_{0,T}\ra^n ]$ for $n\in\mathbb{N}$. We illustrate this through some common choices in the following examples. 
\begin{ex}
    Let $\{(B_t^1,B^2_t)\}_{t\in [0,T]}$ be a two dimensional Brownian motion with independent components, and consider the path  $\hat{Y}_t = (t,B_t^1,B_t^2)$ and  the word $\pi= \mathbf{21}-\mathbf{31} $. Recall that this choice of $\pi$ relates to the spread options case considered in Example \ref{ex:asian-quanto}. We are interested in computing the $n$-th moment of $\la \pi,\hat{\bY}^{\leq \infty}\ra $. In this example, it does not matter if we consider the Itô lift or Stratonovich lift. We first see that $\la \pi,\hat{\bY}^{\leq \infty}_{s,t}\ra= \int_s^t B_r^1-B^2_r \dd r$. Note that since $(B^1_t,B^2_t)$ is a normally distributed random variable, their difference $\bar{B}_r:=  B^1_r-B^2_r \sim \cN(0, 2r)$. A simple argument based on the It\^o formula for $t \bar{B}_t$ shows that 
    \begin{equation*}
        \int_0^t \bar{B}_r\dd r = \int_0^t(t-r)\dd \bar{B}_r=\int_0^t (t-r)\sqrt{2}dB_r, 
    \end{equation*}
    where $B_r$ is a standard Brownian motion.
    Thus, using the It\^o isometry, we see that 
    \begin{equation*}
         \EE[\la \pi,\hat{\bY}^{\leq \infty}_{s,t}\ra^2]=\EE\left[\left(\int_0^t \bar{B}_r\dd r\right)^2 \right] = \frac{2}{3}t^3.  
    \end{equation*}
    By Gaussianity, it furthermore follows that, 
    \begin{equation}
    \label{sig-BM-moment-example}
        \EE[\la \pi,\hat{\bY}^{\leq \infty}_{s,t}\ra^n] = \left(\frac{2}{3}t^{3}\right)^\frac{n}{2} (n-1)!! \quad \mathrm{for\,\, even\,\,} n, 
    \end{equation}
    and $\EE[\la \pi,\hat{\bY}^{\leq \infty}_{s,t}\ra^n]=0$ for odd $n$. Here, $m!!=m(m-2)(m-4)\dots 3\cdot 1$ for $m\in\mathbb N$ being an odd number.
\end{ex}
The above example shows that the moments of words applied to signatures are very easy to calculate when the path is the time-extended Brownian motion. \cite[Section 4.5]{C-SF-FS} derive an explicit formula for the expected signature of the time-extended path of a $d$-dimensional Brownian motion. According to formula (4.18) in \cite{C-SF-FS}, one has
\begin{equation}
\label{expect-sig-BM-christa}
\langle w,\mathbb E[\bY^{\leq N}_{0,t}]\rangle=\frac{(t/2)^{n/2}}{(n/2)!}\prod_{k=0}^{n/2-1}\mathbf{1}_{i_{n-2k}=i_{n-2k-1}}
\end{equation}
for words $w=e_{i_1}\otimes..\otimes e_{i_n}$ where $\{i_1,\ldots,i_n\}\in\{1,\ldots,d\}^n$, even $n\in\mathbb N$ with $n\leq N$ and $e_i$ is the $i$th unit vector in $\mathbb R^d$. The tensor products are here interpreted as the non-symmetric ones and the iterated integrals in the signature are interpreted in the Stratonovich-sense. We recall that in a financial setting however, we are mostly interested in martingale structures, typically guaranteed with the It\^o lift, and must there also compute certain It\^o-Stratonovich corrections that we do not consider further here. 
 When $n$ is odd, the expected signature is zero. An extension of \eqref{expect-sig-BM-christa} to correlated Brownian motions are found in \cite[Thm. A.1]{C-G-SF-SIFIN}. Equation \eqref{expect-sig-BM-christa} is of similar complexity as \eqref{sig-BM-moment-example} in the example above. However, we see that to compute the expected signature, we need to find the whole representation of the expected signature (up to depth $N$) {\it before} applying it to words, whereas in our approach, we
first identify the different terms in the signature which we need according to the words we are given, and {\it then} compute the expected moments in question. The words in the former approach might be very long as we have converted the moments into linear representations, and thus $N$ becomes big. 

Towards signature approximations of complex derivative prices,  we will need to investigate correlators of functionals of the signature. More precisely, let $F\in C(\hat{V}^p([0,T],T^{\lfloor p\rfloor}(\RR^{d+1}))$ and assume Hypothesis \ref{hyp} holds for the weighted words $\{\pi_i\}_{i=1}^n\in T(\RR^{d+1}).$ Then for a multi-index $m\in \NN^n$ we define the correlator $\rho_m$ by
\begin{equation}\label{eq:correlator}
    \rho_m = \EE\left[\left(\la \pi_1,\bY^{\leq \infty}_{0,T}\ra ,\ldots,\la \pi_n,\bY^{\leq \infty}_{0,T}\ra \right)^m\right],
\end{equation}
where we recall that for a vector $x\in \RR^n$ we define $x^m = x^{m_1} \dots x^{m_n}$.  Correlators appear in statistical turbulence theory as interesting objects to study, see \cite{BNBV-ambit}. We also mention \cite{BL-poly-corr} for correlators applied to financial derivatives pricing along with polynomial processes.
Computing these correlators for some arbitrary sequence $\{\pi_i\}_{i=1}^n$ can be challenging, and if $n$ and $d$ gets large, might even become unfeasible. However, for certain linear functionals $\pi_i$, typically given as short words or even single letters, the computation may be analytically tractable by invoking stochastic structures, even for large $n$.

In contrast, the signature methodologies presented in \cite{LyonsNejadAribas2019,Arribasetal20} would require one to compute the expected signature term $\la \phi,  \EE[\bY^{\leq \infty}_{0,T}]\ra$ for some $\phi\in T((\RR^d)^*)$ which potentially becomes a very long sum of very large words. Indeed, given that $\bY^{\leq \infty}$ is a geometric signature,  as already discussed in Remark \ref{rem:computational efficiency}, using the shuffle product from Lemma \ref{lemma:shuffle},  it is possible to find a $\phi\in T((\RR^d)^*)$ such that
\[
\left(\la \pi_1,\bY^{\leq \infty}_{0,T}\ra ,\ldots,\la \pi_n,\bY^{\leq \infty}_{0,T}\ra \right)^m = \la \phi,\bY^{\leq \infty}_{0,T}\ra.
\]
Computationally, it will be more expensive to compute the right hand side than the left hand side, since one would need to compute the complete signature up to a very high degree (i.e. the length of the longest single word in $\phi$). 
Furthermore, we have that the correlator is given by the expectation of the right hand side $\la \phi,\bY^{\leq\infty}_{0,T}\ra$, i.e.,  $\rho_m= \EE[\la \phi,\bY^{\leq \infty}_{0,T}\ra]$, and thus computing these expected signature terms will be challenging. Even in the setting where the underlying path solves an It\^o SDE, one is required to solve a Fokker-Planck equation with values in the tensor algebra $T((\RR^d))$, which is numerically very challenging (see e.g. \cite{LyonsNi2015}).

\section{Approximation of exotic derivatives}\label{sec:derivatives}

We are now ready to present the core results of this article, namely, an approximation formula for exotic, path dependent, financial derivatives. 
To this end, we begin with an assumption on the probability spaces we work with for the admissible price paths. 

\begin{hyp}[Market prices and probability measures]\label{hyp:market prices}
We consider a complete stochastic basis $(\Omega,\cF, \{\cF_t\}_{t\in[0,T]},\PP)$ supporting  market prices as measurable maps from $\Omega$ to $\hat{V}^{p}([0,T];T^{\lfloor p\rfloor }(\RR^{d+1}))$, with the property that for any $\tilde{\epsilon}>0$ there exists a compact subset $\cK_\epsilon \subset \hat{V}^{p}([0,T];T^{\lfloor p\rfloor }(\RR^{d+1})) $ such that 
\[
\PP( \hat{\bX}^{\leq p}_{0,\cdot} \in \cK_\epsilon)\geq 1-\tilde{\epsilon}.
\]
\end{hyp}

\begin{rem}
    Let $E$ be a separable and complete metric space. Then every Borel probability measure $\mu$ on $E$ is tight; that is, for every $\epsilon > 0$, there exists a compact set $\cK_\epsilon \subseteq E$ such that $\mu(\cK_\epsilon) > 1 - \epsilon$, see \cite{Linde1986}. Consequently, if $E := \hat{V}^p([0,T]; T^{\lfloor p \rfloor}(\mathbb{R}^d))$ were separable, the hypothesis above would be unnecessary. 
    
    However, as shown in \cite[Sec. 8]{Friz_Victoir_2010}, the space $V^{p}([0,T], G^N(\RR^d))$ is in general \textit{not} separable.  In contrast, the closure of smooth paths with values in the $N$-step free Lie group $\RR^d$ in $p$-variation norm, denoted by $\overline{C^\infty([0,T],G^N(\RR^d))}^p$, is separable.
    Note also that we have the inclusion of spaces 
    \[
        \bigcup_{1\leq q<p}V_c^{q}([0,T], G^N(\RR^d))\subseteq \overline{C^\infty([0,T],G^N(\RR^d))}^p \subseteq V_c^{p}([0,T], G^N(\RR^d))
    \]
    Since $E$ is not separable, we must explicitly assume that all distributions $\mathbb{P}_{\hat{\mathbb{X}}^{\leq p}}$ on $E$ are tight, or equivalently, that they are Radon measures (finite tight Borel measures). In the non-separable setting of $E$, Radon measures - characterized by their separable image - are the "right" type of measures to consider. For further details, see \cite{Linde1986} or \cite{Billingsley1986}.
\end{rem}
The above market prices and probability measures provides a broad framework for pricing. In the following we will not deal with the problem of risk neutral prices, and rather refer the reader to \cite{arribas2018derivativespricingusingsignature,Arribasetal20} for a discussion on this point. For the purpose here, the reader may assume that the probability measure $\PP$ chosen is a risk neutral measure in the context of the given pricing problem.  
\begin{thm}\label{thm:price approx}
Let $(\Omega, \cF, \{\cF_t\}_{t\in [0,T]}, \PP)$ be a filtered probability space satisfying Hypothesis \ref{hyp:market prices}, and suppose the price of a financial derivative, denoted by $p$, can be represented by an adapted payoff functional $F$ acting on the set of random price paths $L^k(\Omega;\hat{V}^p([0,T];T^{\lfloor p \rfloor }(\RR^{d+1})))$ for some sufficiently large $k$ (see Remark \ref{rem:choice of k}), and is given by 
\begin{equation}\label{eq:payoff}
    p = \EE[F(\hat{\bX}_{0,\cdot}^{\leq p})]. 
\end{equation}
Furthermore, suppose for all $t\in [0,T]$, $\EE[|F(\hat{\bX}_{0,\cdot}^{\leq p})|^q]=M<\infty$  for some $q\geq 1$. 
Then for any $\epsilon>0 $ there exists a finite set $\cN\subset \NN^d$ and  a sequence of numbers $\{\alpha_m\}_{m\in \cN}$ such that 
\begin{equation*}
    \left|p-\sum_{m\in\cN } \alpha_m \rho_m\right|<\epsilon.
 \end{equation*}
Where $\rho_m$ denotes the signature correlator  of $F$   from \eqref{eq:correlator} for the stochastic process $\hat{\bX}^{\leq \infty}$. 
\end{thm}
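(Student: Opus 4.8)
The plan is to combine the tightness built into Hypothesis~\ref{hyp:market prices} with the universal approximation results of Section~\ref{sec:universal}, and then integrate. Fix $\epsilon>0$. Using Hypothesis~\ref{hyp:market prices} with a tightness level $\tilde\epsilon>0$ (to be pinned down at the end), I obtain a compact set $\cK\subset\hat{V}^p_c([0,T];T^{\lfloor p\rfloor}(\RR^d))$ with $\PP(\hat{\bX}^{\leq p}\notin\cK)\leq\tilde\epsilon$. On $\cK$ I invoke the generalized universal approximation Theorem~\ref{thm:universal general} --- or, when $F$ already has the structure of Hypothesis~\ref{hyp}, Theorem~\ref{thm:universal under hyp} --- to get $n\in\NN$, a finite index set $\cN$, linear functionals $\pi_1,\dots,\pi_n\in T(\RR^d)$ and a polynomial $\bar f_{\cN}(x)=\sum_{m\in\cN}\alpha_m x^m$ with
\[
\bigl|F(\hat{\bX}^{\leq p})-\bar f_{\cN}\bigl(\la\pi_1,\hat{\bX}^{\leq\infty}\ra,\dots,\la\pi_n,\hat{\bX}^{\leq\infty}\ra\bigr)\bigr|<\epsilon/2
\]
for every $\hat{\bX}^{\leq p}\in\cK$. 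Writing $\Psi:=\bar f_{\cN}(\la\pi_1,\hat{\bX}^{\leq\infty}\ra,\dots,\la\pi_n,\hat{\bX}^{\leq\infty}\ra)$, linearity of expectation over the finite set $\cN$ and the definition of the correlator in \eqref{eq:correlator} (for the price process $\hat{\bX}$) give $\EE[\Psi]=\sum_{m\in\cN}\alpha_m\rho_m$, each $\rho_m$ being finite by the moment assumption on the price paths; so it suffices to estimate $|p-\EE[\Psi]|\leq\EE\bigl[|F(\hat{\bX}^{\leq p})-\Psi|\bigr]$.

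Next I split this expectation over $\{\hat{\bX}^{\leq p}\in\cK\}$ and its complement. On $\cK$ the integrand is below $\epsilon/2$, so that part contributes at most $\epsilon/2$. On the complement I estimate the two tails separately: $\EE[|F(\hat{\bX}^{\leq p})|\indic_{\{\hat{\bX}^{\leq p}\notin\cK\}}]$ is controlled by H\"older's inequality and the standing bound $\EE[|F(\hat{\bX}^{\leq p})|^q]=M$, yielding $M^{1/q}\tilde\epsilon^{1-1/q}$ (for $q=1$ one uses absolute continuity of the integral instead). For $\EE[|\Psi|\indic_{\{\hat{\bX}^{\leq p}\notin\cK\}}]$ I use Lyons' extension estimate (Theorem~\ref{thm:lyons-extension}), which bounds each $|\la\pi_i,\hat{\bX}^{\leq\infty}\ra|$ by a fixed polynomial in $\|\hat{\bX}\|_p$; hence $|\Psi|$ is dominated by a polynomial $Q(\|\hat{\bX}\|_p)$ whose degree is at most $\max_{m\in\cN}|m|$ times the length of the longest word appearing in the $\pi_i$. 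Provided the price paths lie in $L^k$ with $k$ at least this degree --- precisely the role of ``sufficiently large $k$'' in the statement, see Remark~\ref{rem:choice of k} --- one gets $\Psi\in L^{q'}(\Omega)$, $q'=q/(q-1)$, and H\"older once more gives $\EE[|\Psi|\indic_{\{\hat{\bX}^{\leq p}\notin\cK\}}]\leq\|\Psi\|_{L^{q'}}\tilde\epsilon^{1/q}$. Choosing $\tilde\epsilon$ small makes both tails smaller than $\epsilon/4$, and adding the three contributions gives $|p-\sum_{m\in\cN}\alpha_m\rho_m|<\epsilon$; since $p$ and the $\rho_m$ are deterministic numbers, the $\PP$-a.s.\ qualifier is automatic.

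The genuine obstacle is this tail term $\EE[|\Psi|\indic_{\{\hat{\bX}^{\leq p}\notin\cK\}}]$: the polynomial $\bar f_{\cN}$ --- hence its degree and the norm $\|\Psi\|_{L^{q'}}$ --- is produced only \emph{after} $\cK$ is chosen, while making the tail small asks $\tilde\epsilon$ to be small relative to $\|\Psi\|_{L^{q'}}$, so the choices must be carefully interleaved and the moment requirement on the price paths must be strong enough (growing with the target accuracy) for the argument to close; this is what Remark~\ref{rem:choice of k} is there to record. A cleaner route, available when $F$ satisfies Hypothesis~\ref{hyp} with a function $f$ whose polynomial approximants obey the coefficient bound \eqref{eq:a conditions}, is to invoke the \emph{deterministic} error estimate \eqref{eq:approximation bound general} of Theorem~\ref{thm:universal under hyp}, which holds for all $\hat{\bX}^{\leq p}$ simultaneously: then no tightness argument or tail estimate is needed, one takes expectations directly (justified by $\EE[|F(\hat{\bX}^{\leq p})|]<\infty$) and lets $|\cN|\to\infty$. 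I expect that controlling the tail of $\Psi$ and the bookkeeping of $k$ in the general case is the only real work here; everything else is the Stone--Weierstrass machinery already set up in Theorems~\ref{thm:universal general} and \ref{thm:universal under hyp}.
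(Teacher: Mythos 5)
Your proposal follows essentially the same route as the paper's proof: split the expectation over a compact set supplied by Hypothesis~\ref{hyp:market prices}, apply the generalized universal approximation of Theorem~\ref{thm:universal general} on the compact part, pass the expectation through the polynomial to produce the correlators $\rho_m$, and control the complement by H\"older's inequality against the $q$-moment bound on $F$. If anything you are more careful than the paper in one respect: the paper only bounds $\EE[|F(\hat{\bX}^{\leq p})|\mathbf{1}_{\cK^c}]$ and passes silently from the expectation of the approximant restricted to $\cK$ to $\sum_{m\in\cN}\alpha_m\rho_m$, whereas you explicitly isolate and estimate the tail of the polynomial approximant on $\cK^c$ via the $L^k$ integrability of the price paths and note the interleaving of the choices of $\cK$, the polynomial, and $\tilde\epsilon$ --- which is precisely the point Remark~\ref{rem:choice of k} is meant to absorb.
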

\begin{rem}\label{rem:choice of k}
    Here we consider payoff functionals as functionals acting on random variables  with finite $k$-moments. The requirement on $k$ will depend on the approximation accuracy that is desired, since finiteness of the correlators $\rho_m$ is only guaranteed from the moments of the price paths.  
\end{rem}
\begin{proof}
We begin to observe that for some suffieciently large compact subset  $\mathcal K_{\tilde{\epsilon}} \subset \hat{V}^p([0,T];T^{\lfloor p \rfloor }(\RR^{d+1})) $  we have 
 \begin{align*}
         |\EE[F(\hat{\bX}_{0,\cdot}^{\leq p})]| &= |\EE[F(\hat{\bX}_{0,\cdot}^{\leq p})\mathbf{1}_{\mathcal K_{\tilde{\epsilon}}}|]|+ \EE[|F(\hat{\bX}_{0,\cdot}^{\leq p})\mathbf{1}_{\mathcal K_{\tilde{\epsilon}}^c}|] 
         \\
         &\leq M^{\frac
    {1}{    q}} \tilde{\epsilon}^{\frac{q-1}{q}} + \EE[F(\hat{\bX}_{0,\cdot}^{\leq p})\mathbf{1}_{\mathcal K_{\tilde{\epsilon}}}]. 
    \end{align*}
    In the last inequality we have applied H\"olders inequality to the product $\mathbf{1}_{\mathcal K_{\tilde{\epsilon}}}F(\hat{\bX}_{0,\cdot}^{\leq p})$, invoking the bound on the $q$-moment of $F(\hat{\bX}_{0,\cdot}^{\leq p})$ as well as Hypothesis \ref{hyp:market prices} on the probability measure to get that $(\EE [|\mathbf{1}_{\mathcal K_{\tilde{\epsilon}}}|^{\frac{q}{q-1}}])^{\frac{q-1}{q}}\leq \tilde{\epsilon}^{\frac{q-1}{q}} $.
Now,  apply Theorem \ref{thm:universal general} to the payoff functional $F(\hat{\bX}_{0,\cdot}^{\leq p})$ and the expectation acting on the corresponding signature polynomial from Theorem \ref{thm:universal general} then yields the correlators from \eqref{eq:correlator} applied to the signature $\hat{\bX}^{\leq \infty}_{0,T}$. 
Since $\tilde{\epsilon}$ can be chosen arbitrarily small by assumption (just choose $\cK_\epsilon$ larger), and likewise for the universal approximation, the proof is complete. 
\end{proof}

The next corollary  enables further simplifications for pricing approximation by invoking Hypothesis \ref{hyp} on the payoff functional. 
\begin{cor} \label{thm5.3}
Suppose the price of a financial derivative, denoted by $p$, can be represented by a payoff functional $F$ acting on the set of admissible price paths $\in \hat{V}^p([0,T];T^{\lfloor p \rfloor }(\RR^{d+1}))$, and is given by \eqref{eq:payoff}.
Furthermore, suppose that for $F$ Hypothesis \ref{hyp} holds for some function $f:\RR^n \rightarrow \RR$. 
Then for any $\epsilon>0 $ there exists a finite set $\cN\subset \NN^d$ and  a sequence of numbers $\{\alpha_m\}_{m\in \cN}$ such that 
\begin{equation*}
    \left|p-\sum_{m\in\cN } \alpha_m \rho_m\right|<\epsilon,
 \end{equation*}
\end{cor}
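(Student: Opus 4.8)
The plan is to reduce Corollary \ref{thm5.3} to Theorem \ref{thm:universal under hyp} combined with the argument already used to prove Theorem \ref{thm:price approx}. First I would invoke Hypothesis \ref{hyp} to write $F(\hat{\bX}^{\leq p}) = f(\la \pi_1,\hat{\bX}^{\leq \infty}\ra,\ldots,\la \pi_n,\hat{\bX}^{\leq \infty}\ra)$, and note that, since $F$ is assumed continuous, the remark following Hypothesis \ref{hyp} guarantees $f$ is a continuous function on $G = g(\hat V^p) \subset \RR^n$. Then Theorem \ref{thm:universal under hyp} produces, for each $\delta > 0$, a finite set $\cN \subset \NN^n$ and a polynomial $\bar f_{\cN}(x) = \sum_{m\in\cN}\alpha_m x^m$ such that $|F(\hat{\bX}^{\leq p}) - \bar f_{\cN}(g(\hat{\bX}^{\leq p}))| < \delta$ for all $\hat{\bX}^{\leq p}$ whose signature coordinates lie in a hypercube $\cK_\delta$.

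The next step is to pass from the pathwise bound to a bound on the price $p = \EE[F(\hat{\bX}^{\leq p})]$. Here I would reuse the tightness estimate from the proof of Theorem \ref{thm:price approx}: by Hypothesis \ref{hyp:market prices} one can choose the compact set (equivalently the hypercube $\cK_\delta$, enlarged if necessary) so that $\PP(g(\hat{\bX}^{\leq p}) \notin \cK_\delta) \leq \tilde\epsilon$, and then split
\begin{equation*}
\left| \EE[F(\hat{\bX}^{\leq p})] - \EE[\bar f_{\cN}(g(\hat{\bX}^{\leq p}))] \right| \leq \delta + \EE\!\left[ \left| F - \bar f_{\cN}\circ g \right| \mathbf{1}_{\{g(\hat{\bX}^{\leq p})\notin \cK_\delta\}} \right],
\end{equation*}
controlling the tail term by H\"older's inequality exactly as in Theorem \ref{thm:price approx}, using a moment bound on $F$ (and hence on the polynomial, which is bounded on the compact support modulo a tail that must also be controlled). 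Finally, expanding $\EE[\bar f_{\cN}(g(\hat{\bX}^{\leq p}))] = \sum_{m\in\cN}\alpha_m\,\EE[(\la\pi_1,\hat{\bX}^{\leq\infty}\ra,\ldots,\la\pi_n,\hat{\bX}^{\leq\infty}\ra)^m] = \sum_{m\in\cN}\alpha_m\rho_m$ by linearity of expectation identifies the approximating sum, and choosing $\delta$ and $\tilde\epsilon$ small enough yields the claimed $\epsilon$-bound. (Note the $\PP$-a.s. qualifier in Theorem \ref{thm:price approx} is vacuous here since $p$ and $\sum\alpha_m\rho_m$ are deterministic numbers, so the statement of the corollary is simply $|p - \sum_{m\in\cN}\alpha_m\rho_m| < \epsilon$.)

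The main obstacle I anticipate is the integrability bookkeeping: Theorem \ref{thm:universal under hyp} only guarantees the polynomial approximation on a hypercube $\cK_\delta$, so on the complement the difference $|F - \bar f_{\cN}\circ g|$ is not small and could in principle be large; one must ensure both $F$ and the polynomial $\bar f_{\cN}\circ g$ have enough integrability (a finite $q$-th moment for some $q>1$, as in Theorem \ref{thm:price approx}'s hypothesis) so that the tail contribution vanishes as $\tilde\epsilon \to 0$. For the polynomial term this amounts to a moment bound on the signature coordinates $\la\pi_i,\hat{\bX}^{\leq\infty}\ra$ of sufficiently high order, which is available under the standing assumption that price paths lie in $L^k(\Omega;\hat V^p)$ for $k$ large enough; making the dependence of $k$ on $\cN$ (equivalently on $\epsilon$) explicit, as in Remark \ref{rem:choice of k}, is the one genuinely technical point. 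Everything else is a routine combination of the two cited theorems, so I would keep the proof short and simply point to the estimate in the proof of Theorem \ref{thm:price approx}.
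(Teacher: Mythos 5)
Your proposal is correct and follows essentially the same route as the paper, which proves the corollary in one line by combining Theorem \ref{thm:universal under hyp} with the probabilistic (tightness plus H\"older) analysis from the proof of Theorem \ref{thm:price approx}. Your more explicit treatment of the tail-integrability bookkeeping and the observation that the $\PP$-a.s.\ qualifier is vacuous are sensible elaborations of what the paper leaves implicit, not a different argument.
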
 
\begin{proof}
This follows from the universal approximation in Theorem  \ref{thm:universal under hyp}, together with a similar probabilistic analysis as in the proof of Theorem \ref{thm:price approx}. 
\end{proof}

We illustrate the above corollary by considering a specific choice of functional, common in financial practice, namely the max function acting on the signature; the functionals for certain Asian options. 

\begin{ex}[Simple Asian option]\label{ex:asian}
We will in this example consider a very basic Asian option, and show how moments of the signature can be used to approximate this price. Of course, there are well known formulas and approximations (see e.g. \cite{GemanYor}) for the price of an Asian option in the It\^o setting, but we believe that this is instructive. In incomplete markets, one could also imagine that $X$ is not an It\^o process, and therefore the following expansion could still be an interesting pricing technique.

Let us consider the standard Asian call option payoff function 
\[
F(\hat{\bX}_{0,\cdot}^{\leq p})=f(\la \mathbf{21},\hat{\bX}_{0,T}^{\leq \infty}\ra)=\max\left(0,\int_0^TX_sds-K\right)
\]
A smooth approximation for the max-function is given by $\bar{f}(x)=x\sigma(Nx)=\frac{x}{1+e^{-Nx}}$, where $\sigma(x)=\frac{1}{1+e^{-x}}$ is the well-known sigmoid function and for some large enough $N\in\NN$. The figure below shows how this function, $\bar{f}$, resembles the max-function.
\begin{figure}[h]\label{fig}
\centering
\caption{Plot of $\bar{f}(x)=\frac{x}{1+e^{-Nx}}$  }
\includegraphics[width=0.75\textwidth]{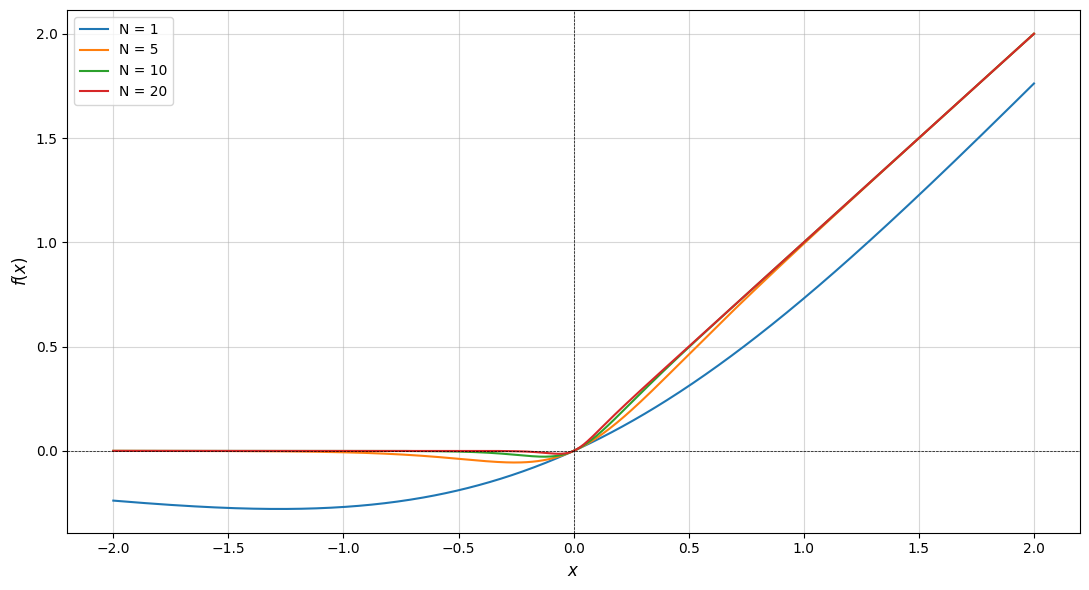}
\end{figure}

We have a Maclaurin series for the Sigmoid function $\sigma(Nx)=\frac{1}{1+e^{-Nx}}$, and then multiplying by $x$  we get : 
\begin{equation}
    \bar{f}(x) = \sum_{n=0}^\infty\frac{(-1)^nE_n(0)}{(2n)!}N^{n}x^{n+1}, 
\end{equation}
where  $E_n(x)$ is the Euler polynomial.
Truncating this approximation at level $M$, one get a price approximation for 
\[
p =\EE[F(\hat{\bX}_{0,\cdot}^{\leq p})]\approx \sum_{n=0}^M \frac{(-1)^nE_n(0)}{(2n)!}N^{n}\EE\left[\la \mathbf{21},\hat{\bX}_{0,T}^{\leq \infty}\ra^{n+1}\right]. 
\]
Providing further theoretical convergence rates can be done under certain assumptions on the moments $\EE[\la \mathbf{21},\hat{\bX}_{0,T}^{\leq \infty}\ra^n]$, but will not be further dealt with here. However, we believe that the example highlights how moments of signatures can be used in derivative price approximation. 
\end{ex}

\subsection{Pricing exotic derivatives in electricity markets}

We consider here some cases of interest in pricing and valuation in electricity markets. 

The so-called {\it quality factor} is used to assess the profitability of renewable power production such as solar or wind. It measures the income relative to a plant with fixed base load price producing the same volume. The quality factor is defined as
\begin{equation}
    Q=\frac{\int_0^TV_sP_s\,ds}{\int_0^TV_s\,ds\frac1T\int_0^TP_s\,ds}\,.
\end{equation}
Here, $V_t$ is the volume power produced at time $t$ and $P_t$ is the spot power price. A natural question is to ask what is the expected quality factor, i.e., $\mathbb E[Q]$. The expectation is either under the risk-adjusted probability or the market probability. 

The volume produced $V_t$ is given by the installed capacity $c$ (measured in megawatt (MW)) times the capacity factor $C_t$. The process $(C_t)$ takes values between 0 and 1, measuring the amount of production from solar or wind in a power plant of capacity 1 MW. We also introduce a 
maximal power price for the market, denoted $P_{\infty}$, which can be an upper limit we believe never will be exceeded in practice. Hence, with $V_t=c C_t$ and $S_t=P_t/P_{\infty}$ we get
\begin{align*}
Q&=\frac{\frac1T\int_0^TC_s S_s\,ds}{\frac1T\int_0^TC_s\,ds\frac1T\int_0^TS_s\,ds}\\
&=\frac1T\int_0^TC_sS_s\,ds\frac{1}{1+(\frac1T\int_0^TC_s\,ds-1)}\frac{1}{1+(\frac1T\int_0^TS_s\,ds-1)}\,.
\end{align*}
But since $C_s,S_s\in(0,1)$, we have that $(1/T)\int_0^TC_s\,ds-1\in(-1,0)$ and similarly $(1/T)\int_0^TS_s\,ds-1\in(-1,0)$, which yields that
\begin{align*}
Q&=\frac1T\int_0^TC_sS_s\,ds\sum_{m=0}^{\infty}(-1)^m\left(\frac1T\int_0^TC_s-1\,ds\right)^m \\
&\qquad\times\sum_{n=0}^{\infty}(-1)^n\left(\frac1T\int_0^TS_s-1\,ds\right)^n\,.
\end{align*}
Introduce the time-enhanced process $Y$ where $Y_t=(t,C_t,S_t)$. 
Since \[C_sS_s=(C_s-C_0)(S_s-S_0)+C_0(S_s-S_0)+S_0(C_s-C_0)+C_0S_0\] and \[\langle\mathbf{2},\hat{\mathbf{Y}}_{0,s}^{\leq\infty}\rangle=C_s-C_0=C_{0,s},\quad \langle\mathbf{3},\hat{\mathbf{Y}}_{0,s}^{\leq\infty}\rangle=S_s-S_0=S_{0,s},\]

it follows from the shuffle property in Lemma \ref{lemma:shuffle} that 
\begin{align*}
\int_0^TC_sS_s\,ds&=\int_0^TC_{0,s}S_{0,s}\,ds+C_0\int_0^TS_{0,s}\,ds+S_0\int_0^TC_{0,s}\,ds+C_0S_0\int_0^Tds\\&=\langle\pi_1,\hat{\mathbf{Y}}_{0,T}^{\leq\infty}\rangle\,,
\end{align*}
where  
\[ 
\pi_1 = (\mathbf{2}\shuffle\mathbf{3})\mathbf{1} + C_0\mathbf{3}\mathbf{1} + S_0\mathbf{2}\mathbf{1}+C_0S_0\mathbf{1}, 
\]

as long as $Y$ is a geometric rough path. Moreover,
we notice that $\int_0^TC_s-1\,ds=\langle \pi_2,\hat{\mathbf{Y}}_{0,T}^{\leq\infty}\rangle$ and
$\int_0^TS_s-1\,ds=\langle \pi_3,\hat{\mathbf{Y}}_{0,T}^{\leq\infty}\rangle$
with $\pi_2=\mathbf{2}\mathbf{1} - C_0\mathbf{1}$ and 
$\pi_3=\mathbf{3}\mathbf{1}-S_0\mathbf{1}$, respectively. Hence, after truncation, we compute an approximation of the expected value of the
quality factor by
\begin{equation}
\label{eq:approx-qf}
\mathbb{E}[Q]\approx\frac1T\sum_{m=0}^M\sum_{n=0}^N(-T)^{-(n+m)}\mathbb{E}\left[\langle\pi_1,\hat{\mathbf{Y}}_{0,T}^{\leq\infty}\rangle\langle \pi_2,\hat{\mathbf{Y}}_{0,T}^{\leq\infty}\rangle^m\langle \pi_3,\hat{\mathbf{Y}}_{0,T}^{\leq\infty}\rangle^n\right]
\end{equation}
We can simulate the functionals inside the above expectation given 
stochastic models of $C$ and $S$. Notice that we only need to have available simulations of the three functionals in order to compute the expectations by Monte Carlo for any orders of $m$ and $n$. If we appeal to Lemma \ref{lemma:shuffle} we can use the shuffle property to re-state the signature functionals inside the expectation to 
$\langle\pi,\hat{\mathbf{Y}}_{0,T}^{\leq\infty}\rangle$, however, $\pi$ will depend on $m$ and $n$ and therefore we would need to consider higher and higher signatures in the calculation. This shows the power in our approach.

\begin{rem}
    In the above discussion of the computation of the expected quality factor, we have to assume $Y$ being a geometric rough path in order to re-express a product between the capacity factor and price processes $C_sP_s$ as a functional on the signature. The capacity factor process is derived from wind speeds or solar irradiation, where empirical studies indicate that higher-order continuous time autoregressive (CAR) processes are suitable (see e.g. \cite{BSB-wind} for wind and \cite{GLB-solar} for solar). The CAR-processes are of order 2 or higher, implying that the paths are continuously differentiable and thus of finite 1-variation. On the other hand, different studies point to power spot prices being sums of Ornstein-Uhlenbeck processes (see \cite{LS2002}), or even fractional models with Hurst parameters less than 0.5 (see \cite{Bennedsen}), and therefore may have regularity at most as  Brownian motion. We notice however, that power spot prices (which is what the process $P$ models) are by definition only available on a discrete time grid (typically of hourly granularity), and hence we can in principle imagine paths which are of high regularity when considering continuous-time paths.         
\end{rem}

Recall Example \ref{ex:asian-quanto}. Quanto options are options with a product payoff on two underlying assets, typically being a call or put on price and a volume variable. The volume variable is indicating production indirectly, for example through temperature (which controls the demand for power) or wind speed (which controls the amount of renewable wind power that can be generated). Let now $V_t$ be the {\it volume}-variable (temperature, wind...). In power, a typical option is a call on average volume over a period, and a put on the price average over the same period.\footnote{This gives an insurance against too low prices when renewable power production is high.}, 
$$
\max\left(\frac1T\int_0^TV_s\,ds-K,0\right)\max\left(L-\frac1T\int_0^TP_s\,ds,0\right)
$$
In general, this can be expressed as 
$$
F(Y)_T=f_{-K}(\langle w_2,\hat{\mathbf Y}^{\leq\infty}_{0,T}\rangle)f_{L}(\langle w_3,\hat{\mathbf Y}^{\leq\infty}_{0,T}\rangle)
$$
where $f_a(x)=\max(0,x+a)$ for some $a\in \RR$,  with words $w_2=T^{-1}\mathbf{2}\mathbf{1}+T^{-1}V_0\mathbf{1}$ and $w_3=-T^{-1}\mathbf{3}\mathbf{1}-T^{-1}P_0\mathbf{1}$.
Here, the time-enhanced process is $Y_t=(t,V_t,P_t)$. If we have power series expansions of $f$ and $g$ available, we can find an approximation of the price expressed by the risk-adjusted expectation by computing terms of the kind 
$$
\mathbb{E}\left[\langle w_2,\hat{\mathbf{Y}}_{0,T}^{\leq\infty}\rangle^m\langle w_3,\hat{\mathbf{Y}}_{0,T}^{\leq\infty}\rangle^n\right]\,.
$$
These expected values are simplified versions of the expectation for the quality factor in \eqref{eq:approx-qf} discussed above.  
The payoff of quanto options motivates studying the following general structure: let $h:\mathbb R^d\rightarrow\mathbb R$ be some measurable function, and consider the payoff 
$$
F(Y)_T=h(\langle\pi_1,\hat{\mathbf Y}^{\leq\infty}_{0,T}\rangle,\ldots,\langle\pi_d,\hat{\mathbf Y}^{\leq\infty}_{0,T}\rangle)
$$
for words $\pi_1,\ldots,\pi_d$ and path $Y$. Indeed, the quality factor is itself an example of a specification of an $h$. The time-enhanced price path may also consist of more variables that $V$ and $P$. For example, one could have an option settled on temperature, wind and price.

\begin{ex}
In this example we will illustrate how specific structural assumptions on the driving underlying price processes can be used to make price approximations expcicit by leveraging moment computations.  Consider two correlated Ornstein-Uhlenbeck processes
\[
\begin{aligned}
Y^1_t&=-a_1Y^1_tdt+\sigma_1 dB^1_t
\\
Y^2_t&=-a_2Y^2_tdt+\sigma_2( \rho \dd B_t^1+\sqrt{1-\rho^2} \dd B^2_t)
\end{aligned}
\]
for two independent Brownian motions $B^1$ and $B^2$, and $\rho\in (-1,1)$ is the correlation coefficient. Solving explicitly, we see that the difference $\bar{Y}_t= Y^1_t-Y^2_t$ is given by 
\begin{multline}
    \bar{Y}_t= e^{-a_1t}Y^1_0-e^{-a_2t}Y^2_0
    + \int_0^t \sigma_1 e^{-a_1(t-s)}-\rho \sigma_2 e^{-a_2(t-s)}\dd B^1_s
   \\  - \int_0^t e^{-a_2(t-s)}\sigma_2\sqrt{1-\rho^2 }\dd B_s^2.
\end{multline}
We then see that $\bar{Y}$ is normally distributed with mean $e^{-a_1t}Y^1_0-e^{-a_2t}Y^2_0$, and second moment given by 
\begin{multline}
    \EE[\bar{Y}^2_t] = \int_0^t \sigma_1^2 e^{-2a_1(t-s)}+\sigma_2^2 e^{-2a_2(t-s)}
    -2\sigma_1\rho \sigma_2  e^{-(a_1+a_2)(t-s)}\dd s.
\end{multline}
Now, just as in Example \ref{sig-BM-moment-example}, we again consider the pairing $\la \pi,\hat{\bY}^{\leq\infty}_{0,T}\ra=\int_{0}^t\bar{Y}_sds=:Z_t,$ where
$\pi=\mathbf{21}-\mathbf{31}+\bar{Y}_0\mathbf{1}.$ A simple use of Fubini yields that 
\begin{multline}
    Z_t = \frac{Y^1_0}{a_1} (1 - e^{-a_1t}) - \frac{Y^2_0}{a_2} (1 - e^{-a_2t}) 
+ \int_0^t \frac{\sigma_1}{a_1} \left( 1 - e^{-a_1(t-u)} \right) \, \mathrm{d}B^1_u 
\\
- \int_0^t \frac{\rho \sigma_2}{a_2} \left( 1 - e^{-a_2(t-u)} \right) \, \mathrm{d}B^1_u 
- \int_0^t \frac{\sigma_2 \sqrt{1-\rho^2}}{a_2} \left( 1 - e^{-a_2(t-u)} \right) \, \mathrm{d}B^2_u.
\end{multline}
Using Itô-isometry, we get the mean and second moment of $Z_t=\la \pi,\hat{\bY}^{\leq\infty}_{0,T}\ra$ is given by 
\begin{align*}
\mu_Z:= \mathbb{E}[Z_t] &= \frac{Y^1_0}{a_1}(1 - e^{-a_1t}) - \frac{Y^2_0}{a_2}(1 - e^{-a_2t}).
\\
\sigma^2_Z:=\mathbb{E}[Z_t^2]& = 
\left(\frac{Y^1_0}{a_1}(1 - e^{-a_1t}) - \frac{Y^2_0}{a_2}(1 - e^{-a_2t})\right)^2 \\
&+ \int_0^t \left(\frac{\sigma_1}{a_1}(1 - e^{-a_1(t-u)})\right)^2 \, \mathrm{d}u \\
&+ \int_0^t \left(\frac{\rho \sigma_2}{a_2}(1 - e^{-a_2(t-u)})\right)^2 \, \mathrm{d}u \\
&+ \int_0^t \left(\frac{\sigma_2 \sqrt{1-\rho^2}}{a_2}(1 - e^{-a_2(t-u)})\right)^2 \, \mathrm{d}u.
\end{align*}
Moreover, since $Z_t=\la \pi,\hat{\bY}^{\leq\infty}_{0,T}\ra$ is Gaussian, the higher order moments become
\[
\EE[\la \pi,\hat{\bY}^{\leq\infty}_{0,T}\ra^n]=\EE[Z_t^n]= \sigma_Z^n (-i\sqrt{2})^{n} U\left(-\frac{p}{2},\frac{1}{2},-\frac{1}{2} \left(\frac{\mu_Z}{\sigma_Z}\right)^2 \right), 
\]
where $U$ is the confluent hyper-geometric function.
Now,  the correlator 
is given by $\rho_{n}=\EE\left[\la \pi,\hat{\bY}^{\leq\infty}_{0,T}\ra^{n}\right]$, hence from
Corollary \ref{thm5.3}, we know there exist an $N\in\NN$ and $\{\alpha_n\}_{n=0}^N\subset \RR$ such that
    \[
    p:=\EE\left[F(\hat{\bY}^{\leq p}_{0,\cdot})\right]\approx \sum_{n=0}^N\alpha_n\rho_n=\sum_{n=1}^N \alpha_n 
 \sigma_Z^n (-i\sqrt{2})^{n} U\left(-\frac{p}{2},\frac{1}{2},-\frac{1}{2} \left(\frac{\mu_Z}{\sigma_Z}\right)^2 \right).
    \]
    Consider the setting where the expected functional  is an Asian option, such as analyzed in \ref{ex:asian}. One can then use the coefficients found there to get an explicit formula for the approximation of an Asian spread option,
 in the case when the underlying processes are assumed to be given by Ornstein-Uhlenbeck processes. 
\end{ex}

\section{Conclusion}\label{sec:conclussion}

In this work, we  develop a new type of universal approximation theorem for non-geometric rough paths, addressing the practical challenges of financial applications that naturally involve Itô integration. The results presented provide a robust framework for advancing derivatives pricing methodologies in financial markets, ensuring both computational efficiency and theoretical rigor. 
By introducing a polynomial-based approximation framework, we demonstrated how complex payoff functionals for financial derivatives can be efficiently represented and approximated using signature terms. This approach bridges the gap between rough path theory and the practical requirements of financial markets, providing a robust tool for pricing exotic derivatives and path-dependent contracts.

Our results highlight the versatility of signatures in capturing the intricacies of stochastic paths while enabling computational efficiency and how this can be used in finance. The proposed framework not only broadens the scope of universal approximation beyond geometric rough paths but also lays a foundation for further exploration of functional approximation in stochastic finance. 
 Specifically, it builds further on the research developed in \cite{LyonsNejadAribas2019,Arribasetal20,arribas2018derivativespricingusingsignature}, and provides a new perspective in the It\^o setting
The methodology for universal approximation here seems also promising in the context of the Volterra signature induced from the analysis in \cite{Harang_Tindel21}, and this is currently something we are working on. Several new applications of this method seems promising.

\bibliographystyle{alpha} % We choose the "plain" reference style

\end{document}